\numberwithin{equation}{section}
\newtheorem{theorem}{Theorem}[section]
\newtheorem{proposition}[theorem]{Proposition}
\newtheorem{lemma}[theorem]{Lemma}
\theoremstyle{definition}
\newtheorem{remark}[theorem]{Remark}
\begin{document}

\baselineskip=15pt

\title[Holomorphic foliations with no transversely projective
structure]{Holomorphic foliations with no transversely projective structure}

\author[I. Biswas]{Indranil Biswas}

\address{Department of Mathematics, Shiv Nadar University, NH91, Tehsil Dadri,
Greater Noida, Uttar Pradesh 201314, India}

\email{indranil.biswas@snu.edu.in, indranil29@gmail.com}

\author[S. Dumitrescu]{Sorin Dumitrescu}

\address{Universit\'e C\^ote d'Azur, CNRS, LJAD, France}

\email{dumitres@unice.fr}

\subjclass[2010]{32M25, 53A20, 53C12}

\keywords{Holomorphic foliation; projective structure; transversely projective structure; flat bundle}

\date{}

\begin{abstract}
We prove that on the product of two elliptic curves a generic nonsingular turbulent holomorphic foliation does 
not admit any transversely holomorphic projective structure.
\end{abstract}

\maketitle


\section{Introduction}

In \cite{Gh}, Ghys proved a classification result for nonsingular codimension one holomorphic foliations 
on compact complex tori. He proved that such foliations are either linear (e.g. they are defined as 
the kernel of a global nonzero holomorphic one-form on the torus) or {\it turbulent}.

We recall that a turbulent foliation on a compact complex torus ${\mathbb C}^n / \Lambda$ is constructed using 
a holomorphic fibration $\pi \,:\, {\mathbb C}^n \,\longrightarrow\, C$, over an elliptic curve $C$ and it is 
defined by a closed meromorphic one-form $\eta \,=\, (\pi^*\omega) + \beta$, where $\omega$ is a meromorphic 
one-form on $C$ and $\beta$ is a holomorphic one-form on ${\mathbb C}^n / \Lambda$ that does not vanish on the
fibers of $\pi$. These foliations are nonsingular (see \cite{Gh}).

A more general notion of a singular turbulent foliation was defined in \cite{Br2}, where Brunella 
classified all (possibly singular) codimension one foliations on compact complex tori. A general study of 
holomorphic foliations on compact homogeneous K\"ahler manifolds was carried out in \cite{LoP}.

The dynamics of the above mentioned turbulent foliation was described in \cite{Gh, Br2}: the inverse image 
through $\pi$ of the polar divisor of the meromorphic one-form $\beta$ is a finite union of compact leafs and 
all other leafs are noncompact and accumulate on every compact leaf.
It should be mentioned that turbulent foliations, being defined by global closed meromorphic one-forms, 
admit a {\it singular} transversely complex projective structure in the sense of \cite{LPT}. Also Brunella 
classified in \cite{Br1} non-singular holomorphic foliations on compact complex surfaces. A consequence of his 
study is that all those foliations admit a singular transversely complex projective structure.

This article deals with the question whether nonsingular turbulent foliations do admit nonsingular 
transversely projective structures. Notice that if the holomorphic one-form $\beta$ is zero (or vanishes on the 
fibers of $\pi$), the turbulent foliation degenerate to a fibration. All fibrations over a Riemann surface 
admit a nonsingular transversely projective structure which is a consequence of the uniformization theorem for 
Riemann surfaces.

We study turbulent foliations on the product of two elliptic curves. Our first result is that each turbulent 
foliation admits at most one nonsingular transversely complex projective structure (see Proposition 
\ref{prop1}).

The main result proved here is that for a polar part of the above meromorphic one-form $\omega$ of degree $d 
\,\geq\, 8$, generic turbulent foliations do not admit any nonsingular transversely complex projective structure 
(see Theorem \ref{thm1}).

To the best of our knowledge these are the first known examples of nonsingular codimension one holomorphic 
foliations on a complex projective manifold admitting no transversely complex projective structures.

The organization of the paper is as follows. Section \ref{se2} describes the details of the construction of 
turbulent foliations on the product of two elliptic curves. Section \ref{se3} introduces the classical notion 
of complex projective structure on a Riemann surface and presents the Ehresmann's equivalent description given 
by a flat ${\mathbb P}^1$-bundle with a transverse holomorphic section. Section \ref{se 4} gives the foliated 
version of the complex projective structures and also their description using foliated flat 
${\mathbb P}^1$-bundles. The last Section \ref{se 5} provides the proofs of the main results: The uniqueness 
of the transversely projective structure (Proposition \ref{prop1}) and the non-existence of a transversely 
projective structure for a generic turbulent foliation (Theorem \ref{thm1}).

\section{A turbulent foliation}\label{se2}

Take a compact connected Riemann surface $C$ of genus one. Fix an integer $d\,\geq\, 2$.
The group $S_d$ of permutations of $\{1,\, \cdots,\, d\}$ acts on $C^d$ by permuting the entries
in the Cartesian product. Let
$$
\text{Sym}^d(C)\,\, :=\,\, C^d/S_d
$$
be the symmetric product. The complement of the big diagonal in $\text{Sym}^d(C)$ will be denoted by
$\text{Sym}^d_0(C)$. So an element $\{x_1,\, \cdots,\, x_d\}\,\in \text{Sym}^d(C)$ lies in
$\text{Sym}^d_0(C)$ if and only if $\{x_1,\, \cdots,\, x_d\}$ are all distinct. For any
$\underline{x}\,=\, \{x_1,\, \cdots,\, x_d\}\,\in \text{Sym}^d_0(C)$, we have the reduced effective divisor
$\sum_{i=1}^d x_i$ on $C$ of degree $d$. For notational convenience, this
divisor will also be denoted by $\underline{x}$.

Take two points $\underline{x}\,=\, \{x_1,\, \cdots,\, x_d\},\,\, \underline{y}\,=\, \{y_1,\, \cdots,\, y_d\}\, 
\in\, \text{Sym}^d_0(C)$. Fixing an element $x_0\,\in\, C$ as the identity element, we can make $C$ into a
complex abelian Lie group. Note that the condition that the two elements $\sum_{i=1}^d x_i$ and $\sum_{i=1}^d y_i$
of $C$ coincide does not depend on the choice of the point $x_0$.

\begin{lemma}\label{lem1}
Take $\underline{x},\,\, \underline{y}\,\in\, {\rm Sym}^d_0(C)$ as above such that
$\{x_1,\, \cdots,\, x_d\}\cap\{y_1,\, \cdots,\, y_d\}\, =\, \emptyset$. If there is a
meromorphic function on $C$ whose zero divisor is $\underline{y}$ and the pole divisor is
$\underline{x}$, then the two elements $\sum_{i=1}^d x_i$ and $\sum_{i=1}^d y_i$ of $C$ coincide.
\end{lemma}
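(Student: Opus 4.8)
The plan is to recognize this statement as Abel's theorem for the genus-one curve $C$, and to sketch the classical residue computation that proves it. Write $C\,=\,{\mathbb C}/\Lambda$ with $\Lambda\,=\,{\mathbb Z}\omega_1\oplus{\mathbb Z}\omega_2$; then the global holomorphic $1$-form on $C$ is $dz$, and a meromorphic function $f$ on $C$ is an elliptic function for $\Lambda$. Since $\underline{x},\underline{y}\in {\rm Sym}^d_0(C)$ have disjoint supports, the hypothesis says that the divisor of $f$ is $\underline{y}-\underline{x}$ with all $2d$ of its points simple, so the meromorphic $1$-form $\frac{df}{f}\,=\,\frac{f'(z)}{f(z)}\,dz$ has simple poles exactly at these points, with residue $+1$ at each $y_i$ and residue $-1$ at each $x_i$.

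First I would fix a fundamental parallelogram $P$ for $\Lambda$, with vertices $t,\,t+\omega_1,\,t+\omega_1+\omega_2,\,t+\omega_2$, after translating $t$ so that no $x_i$ or $y_i$ lies on $\partial P$, and choose the representatives $x_i,y_i$ in the interior of $P$. By the residue theorem, applied to the function $z\,f'(z)/f(z)$ on a neighbourhood of $\overline{P}$ in ${\mathbb C}$,
\[
\frac{1}{2\pi\sqrt{-1}}\oint_{\partial P} z\,\frac{f'(z)}{f(z)}\,dz \;=\; \sum_{i=1}^d y_i \;-\;\sum_{i=1}^d x_i .
\]
Next I would compute the left-hand side by pairing opposite sides of $\partial P$ and using that $f'/f$ is $\Lambda$-periodic: the pair of sides differing by $\omega_2$ contributes together $-\omega_2\cdot\frac{1}{2\pi\sqrt{-1}}\int_t^{t+\omega_1}\frac{f'(z)}{f(z)}\,dz$, and the pair differing by $\omega_1$ contributes an analogous multiple of $\omega_1$. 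Each of the two scalar integrals occurring here equals $\frac{1}{2\pi\sqrt{-1}}$ times the variation of a branch of $\log f$ along a loop of $C$ avoiding the zeros and poles of $f$, hence is an integer. Thus the left-hand side lies in $\Lambda$, and therefore $\sum_{i=1}^d y_i-\sum_{i=1}^d x_i\in\Lambda$, i.e. $\sum_{i=1}^d x_i\,=\,\sum_{i=1}^d y_i$ in ${\mathbb C}/\Lambda$.

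Finally I would translate this congruence into the asserted equality in the group $(C,x_0)$. The assignment $x\longmapsto z(x)-z(x_0)$ induces the group isomorphism $(C,x_0)\,\cong\,{\mathbb C}/\Lambda$, under which the group sum $\sum_{i=1}^d x_i$ of $(C,x_0)$ corresponds to $\bigl(\sum_{i=1}^d z(x_i)\bigr)-d\,z(x_0)$, and similarly for $\underline{y}$. Subtracting, the term $d\,z(x_0)$ cancels, so the two group sums of $(C,x_0)$ coincide precisely when $\sum_i z(x_i)\equiv\sum_i z(y_i) \pmod{\Lambda}$, which is what was just shown; in particular this also explains why the stated condition is independent of the choice of $x_0$.

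The only mildly delicate point is the periodicity bookkeeping in the contour integral — checking that the cross-terms coming from opposite sides add up to an element of $\Lambda$ and that the residual scalar integrals are integers — but this is exactly the standard genus-one proof of Abel's theorem and poses no real difficulty. Alternatively, one may simply invoke Abel's theorem, noting that $\underline{y}-\underline{x}$ is a principal divisor of degree $0$ while $x\mapsto[x-x_0]$ identifies $(C,x_0)$ with ${\rm Pic}^0(C)$.
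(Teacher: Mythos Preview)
Your proof is correct. The main route you take --- the classical residue computation on a fundamental parallelogram, applied to $z\,f'(z)/f(z)$ --- is genuinely different from the paper's argument, which is purely algebraic: the paper identifies $(C,x_0)$ with $J^0(C)$ via $z\mapsto{\mathcal O}_C(z-x_0)$ and observes that the existence of $f$ means ${\mathcal O}_C\big(\sum_i(x_i-y_i)\big)$ is trivial, hence $\sum_i(x_i-y_i)$ is the identity $x_0$. Your analytic argument is self-contained and makes explicit why the statement is Abel's theorem in genus one, at the cost of the contour bookkeeping; the paper's Jacobian argument is a two-line invocation of Abel--Jacobi, but presupposes that machinery. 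In fact, the ``alternative'' you sketch in your last sentence --- principal divisor plus the identification $x\mapsto[x-x_0]$ of $(C,x_0)$ with ${\rm Pic}^0(C)$ --- is exactly the paper's proof.
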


\begin{proof}
Fix a point $x_0\, \in\, C$ as the identity element, and consider $C$ as a group. Then $C$ is identified with 
the Jacobian $J^0(C)$ by the map that sends any $z\, \in\, C$ to the line bundle ${\mathcal O}_C(z-x_0)$. In 
this identification, the element ${\mathcal O}_C(\sum_{i=1}^d (x_i-y_i))\, \in\, J^0(C)$ is mapped to 
$\sum_{i=1}^d (x_i-y_i)\,\in\, C$. Consequently, if there is a meromorphic function on $C$ whose zero divisor 
is $\underline{y}$ and the pole divisor is $\underline{x}$, then $\sum_{i=1}^d (x_i-y_i)\,=\, x_0\,\in\, C$.
\end{proof}

The following lemma is a converse of Lemma \ref{lem1}.

\begin{lemma}\label{lem2}
Take two points $\underline{x}\,=\, \{x_1,\, \cdots,\, x_d\},\,\, \underline{y}\,=\, \{y_1,\, \cdots,\, y_d\}\, 
\in\, {\rm Sym}^d_0(C)$ such that
\begin{itemize}
\item $\{x_1,\, \cdots,\, x_d\}\cap\{y_1,\, \cdots,\, y_d\}\, =\, \emptyset$, and

\item the two elements $\sum_{i=1}^d x_i$ and $\sum_{i=1}^d y_i$ of $C$ coincide.
\end{itemize}
Then there is a meromorphic function on $C$ whose zero divisor is $\underline{y}$ and the pole divisor is
$\underline{x}$.

Any two such meromorphic functions differ by multiplication with a nonzero complex number.
\end{lemma}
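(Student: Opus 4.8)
The plan is to prove Lemma \ref{lem2} using the Riemann--Roch theorem and Abel's theorem (or, equivalently, the identification of $C$ with its Jacobian used in the proof of Lemma \ref{lem1}). For the existence part, consider the line bundle $L\,:=\,{\mathcal O}_C(\underline{x})$ of degree $d\,\geq\, 2$ on the genus-one curve $C$. A meromorphic function with zero divisor $\underline{y}$ and pole divisor $\underline{x}$ is precisely a global section $s$ of $L$ whose divisor of zeros is $\underline{y}$; equivalently, it is a section vanishing exactly on $\underline y$. So the task is to produce such a section.

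First I would use the hypothesis $\sum x_i\,=\,\sum y_i$ in $C$ to show that ${\mathcal O}_C(\underline{x})\,\cong\, {\mathcal O}_C(\underline{y})$ as line bundles: indeed, under the identification of $C$ with $J^0(C)$ sending $z\,\mapsto\,{\mathcal O}_C(z-x_0)$ recalled in the proof of Lemma \ref{lem1}, the class of ${\mathcal O}_C(\sum_i(x_i-y_i))$ corresponds to $\sum_i(x_i-y_i)\,=\,x_0$, i.e.\ ${\mathcal O}_C(\underline x-\underline y)$ is trivial. Pick an isomorphism $\varphi\,:\,{\mathcal O}_C(\underline{y})\,\stackrel{\sim}{\longrightarrow}\, {\mathcal O}_C(\underline{x})\,=\,L$; the tautological section of ${\mathcal O}_C(\underline{y})$ (vanishing exactly on $\underline y$) maps under $\varphi$ to a section $s\,\in\, H^0(C,L)$ whose zero divisor is exactly $\underline{y}$. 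Since $\underline y$ is disjoint from $\underline x$, the quotient $s/s_{\underline x}$, where $s_{\underline x}$ is the tautological section of $L\,=\,{\mathcal O}_C(\underline x)$ cutting out $\underline x$, is a meromorphic function on $C$ with zero divisor $\underline{y}$ and pole divisor $\underline{x}$, as required. (Alternatively, one can avoid Jacobians and argue via Riemann--Roch that $h^0(C,L)\,=\,d$ and directly analyze the linear system, but the Jacobian route is cleaner and parallels Lemma \ref{lem1}.)

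For the uniqueness part, suppose $f_1$ and $f_2$ are two meromorphic functions each with zero divisor $\underline y$ and pole divisor $\underline x$. Then $g\,:=\,f_1/f_2$ is a meromorphic function on $C$ whose divisor is $\mathrm{div}(f_1)-\mathrm{div}(f_2)\,=\,(\underline y-\underline x)-(\underline y-\underline x)\,=\,0$, so $g$ has neither zeros nor poles. A global holomorphic function without zeros on the compact Riemann surface $C$ is constant (by the maximum principle, since $C$ is compact and connected), and it is a nonzero constant because $g$ is not identically zero. Hence $f_1\,=\,c\, f_2$ for some $c\,\in\,{\mathbb C}^*$, which is the asserted statement.

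I do not expect a serious obstacle here; the only point requiring a little care is the translation between the hypothesis $\sum x_i\,=\,\sum y_i$ in the group $C$ and the linear equivalence $\underline x\,\sim\,\underline y$ of divisors, which is exactly the content already used in Lemma \ref{lem1} (Abel's theorem for genus one), so it can simply be invoked. The disjointness hypothesis $\{x_i\}\cap\{y_i\}\,=\,\emptyset$ is used only to ensure that the resulting meromorphic function has zero and pole divisors that are literally $\underline y$ and $\underline x$ with no cancellation; it plays no role in existence of the linear equivalence itself.
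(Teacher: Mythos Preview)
Your proof is correct and follows essentially the same route as the paper: both use the identification of $C$ with $J^0(C)$ from Lemma~\ref{lem1} to conclude that ${\mathcal O}_C(\underline{x}-\underline{y})$ is trivial, and then extract the desired meromorphic function from a trivializing section. Your uniqueness argument via $f_1/f_2$ being a nowhere-vanishing holomorphic function on a compact Riemann surface is exactly the standard reasoning the paper summarizes as ``obvious.''
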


\begin{proof}
As in the proof of Lemma \ref{lem1}, fix a point $x_0\, \in\, C$ as the identity element. From the
second condition we know that the element ${\mathcal O}_C(\sum_{i=1}^d (x_i-y_i))\, \in\, J^0(C)$ is mapped
to $\sum_{i=1}^d (x_i-y_i)\,=\, x_0\,\in\, C$ by the identification in the proof of Lemma \ref{lem1}.
Now $x_0$ corresponds
to the identity element ${\mathcal O}_C\,\in\, J^0(C)$. Therefore, an isomorphism between
${\mathcal O}_C(\sum_{i=1}^d (x_i-y_i))$ and ${\mathcal O}_X$ takes the section of ${\mathcal O}_X$, given
by a nonzero constant function on $C$, to a meromorphic function on $C$ whose zero divisor is
$\underline{y}$ and the pole divisor is $\underline{x}$.

The second statement of the lemma is obvious.
\end{proof}

The canonical line bundle of $C$, which is a holomorphically trivializable line bundle, will be denoted by $K_C$.
Take two points
\begin{equation}\label{e1}
\underline{x}\,=\, \{x_1,\, \cdots,\, x_d\},\,\,\,\, \underline{y}\,=\, \{y_1,\, \cdots,\, y_d\}\, 
\in\, \text{Sym}^d_0(C)
\end{equation}
such that
\begin{itemize}
\item $\{x_1,\, \cdots,\, x_d\}\cap\{y_1,\, \cdots,\, y_d\}\, =\, \emptyset$, and

\item the two elements $\sum_{i=1}^d x_i$ and $\sum_{i=1}^d y_i$ of $C$ coincide.
\end{itemize}
Since $K_C$ is holomorphically isomorphic to ${\mathcal O}_C$, from Lemma \ref{lem2} we know that there is a meromorphic
one-form $\omega$ on $C$ whose zero divisor is $\underline{y}$ and the pole divisor is $\underline{x}$. So we have
\begin{equation}\label{en2}
\omega\,\ \in\,\, H^0(C,\, K_C\otimes {\mathcal O}_C(\underline{x})),
\end{equation}
and this holomorphic section satisfies the conditions that $\omega(y_i)\,=\, 0$ for all $1\, \leq\, i\, \leq\, d$ and
$\omega(x_i)\,\not=\, 0$ for all $1\, \leq\, i\, \leq\, d$ (see \eqref{e1}).

Let $X$ be a compact connected Riemann surface of genus one; its canonical bundle will be denoted
by $K_X$. Let
\begin{equation}\label{e2}
\beta \,\, \in\,\, H^0(X,\, K_X)\setminus \{0\}
\end{equation}
be a nonzero holomorphic $1$--form.

Fix holomorphic trivialization of $K_C$. 

Let
\begin{equation}\label{e3}
\phi_C\,:\, C\times X\, \longrightarrow\, C \ \ \text{ and }\ \ \phi_X\,:\, C\times X\, \longrightarrow\, X
\end{equation}
be the natural projections. The holomorphic line bundle $\phi^*_C{\mathcal O}_C(-\underline{x})$ on $C\times X$ will
be denoted by $L$. Note that we have $\phi^*_C\omega\,\in\, H^0(C\times X,\, L^*)$ (see \eqref{en2}). So
$\phi^*_C\omega$ gives a homomorphism
\begin{equation}\label{k2}
L\,\, \longrightarrow\,\, {\mathcal O}_{C\times X}.
\end{equation}
Composing this homomorphism with the homomorphism
$$
{\mathcal O}_{C\times X} \,=\, \phi^*_C K_C \, \hookrightarrow\, (\phi^*_C K_C)\oplus (\phi^*_X K_X)
\,=\, \Omega^1_{C\times X},
$$
where the isomorphism ${\mathcal O}_{C\times X} \,=\, \phi^*_C K_C$ is given by the trivialization of $K_C$,
we get a homomorphism
\begin{equation}\label{k1}
\omega'\,\,:\,\, L\,\, \longrightarrow\,\, \Omega^1_{C\times X}.
\end{equation}
Composing the homomorphism in \eqref{k2} with the homomorphism
$$
{\mathcal O}_{C\times X}\, \xrightarrow{\,\,\, \phi^*_X \beta\,\,\,} \, \phi^*_X K_X
\, \hookrightarrow\, (\phi^*_X K_X)\oplus (\phi^*_C K_C) \,=\, \Omega^1_{C\times X},
$$
where $\beta$ is the $1$-form in \eqref{e2}, we get a homomorphism
\begin{equation}\label{k3}
\beta'\,\,:\,\, L\,\, \longrightarrow\, \, \Omega^1_{C\times X}.
\end{equation}
Now we have the homomorphism
\begin{equation}\label{e4}
\vartheta\,\,:=\,\, \omega'+ \beta'\,\,:\,\, L\,\, \longrightarrow\,\, \Omega^1_{C\times X} \oplus
\Omega^1_{C\times X},
\end{equation}
where $\omega'$ and $\beta'$ are constructed in \eqref{k1} and \eqref{k3} respectively.

We will show that the subsheaf $\vartheta(L)\, \subset\,
\Omega^1_{C\times X}$ in \eqref{e4} is a subbundle (equivalently,
$\Omega^1_{C\times X}/\vartheta(L)$ is locally free). To prove this, first note
that the homomorphism $\omega'$ in \eqref{k1} vanishes 
exactly on $$\phi^{-1}_C({\rm Div}(\omega))\,=\, \{y_1,\, \cdots,\, y_d\}\times X\,\subset\, C\times X.$$
On the other hand, the homomorphism $\beta'$ in \eqref{k3}
vanishes exactly on $$\{x_1,\, \cdots,\, x_d\}\times X\, \subset\, C\times D.$$
Since $\underline{x}$ and $\underline{y}$ are disjoint, we conclude that the subsheaf $\vartheta(L)\,
\subset\,\Omega^1_{C\times X}$ in \eqref{e4} is a subbundle.

Since $\vartheta(L)$ is a line subbundle of $\Omega^1_{C\times X}$,
\begin{equation}\label{e4a}
{\mathcal F}\,\, :=\,\, \text{kernel}(\vartheta(L)) \,\, \subset\,\, T(C\times X)
\end{equation}
is a nonsingular holomorphic $1$-dimensional foliation on $C\times X$.

\section{Projective structure on a Riemann surface}\label{se3}

\subsection{Definition of a projective structure}

Let $M$ be a connected Riemann surface. A holomorphic coordinate function on $M$ is a pair
$(U,\, f)$, where $U\, \subset\, M$ is an open subset and $$f\, :\, U\, \longrightarrow\,
{\mathbb C}{\mathbb P}^1\,=\, {\mathbb C}\cup \{\infty\}$$
is a holomorphic isomorphism between $U$ and $f(U)$. A holomorphic coordinate atlas on $M$ is a collection
of holomorphic coordinate functions $\{(U_i,\, f_i)\}_{i\in I}$ on $M$ such that $\bigcup_i U_i\,=\, M$.
A projective structure on $M$ is defined by a holomorphic coordinate atlas
$\{(U_i,\, f_i)\}_{i\in I}$ on $M$ such that for any $i,\, j\, \in\, I$, and any nonempty connected
component $V\, \subset\, U_i\bigcap U_j$, the map
$$
f_i\circ (f^{-1}_j\big\vert_{f_j(V)}) \,\, :\,\, f_j(V) \,\, \longrightarrow\,\,
f_i(V)\,\,\subset\, {\mathbb C}{\mathbb P}^1
$$
is the restriction of a M\"obius transformation; see \cite{Gu}, \cite{De}. Two such 
holomorphic coordinate atlases $\{(U_i,\, f_i)\}_{i\in I}$ and $\{(U_i,\, f_i)\}_{i\in J}$ on $M$
define the same projective structure if the holomorphic coordinate atlas on $M$ given
by their union $\{(U_i,\, f_i)\}_{i\in I\cup J}$ also satisfies the above condition.
A holomorphic coordinate function $(U,\, f)$ is said to be compatible with the projective structure
on $M$ defined by a holomorphic coordinate atlas $\{(U_i,\, f_i)\}_{i\in I}$ if the union
$\{(U_i,\, f_i)\}_{i\in I}\bigcup \{(U,\, f)\}$ defines a projective structure on $M$.

The holomorphic cotangent and tangent bundles of $M$ will be denoted by $K_M$ and $TM$ respectively.

There are projective structures on $M$, for example, the uniformization of $M$ produces a projective
structure on $M$. The space of all projective structures on $M$ is an affine space for the vector space
$H^0(M,\, K^{\otimes 2}_M)\,=\, H^0(M,\, ((TM)^{\otimes 2})^*)$ (see \cite{Gu}, \cite{Hu}). 
This affine space structure of the space of all projective structures on $M$ will be recalled below.

\subsection{A flat projective bundle}\label{se3.2}

For a holomorphic vector bundle $V$ on a Riemann surface $M$, and any nonnegative integer $i$,
the $i$-th order holomorphic jet bundle for $V$ will be denoted by $J^i(V)$. The fiber of
$J^i(V)$ over any point $x\, \in\, M$ is the space of all holomorphic sections of $V$ over the
$i$-th order infinitesimal neighborhood of $x$ (see \cite[Part 4, Ch.~6]{Gro}, \cite{De}, \cite{BR} for
the construction of the jet bundles).

The Lie bracket operation on the holomorphic vector fields on ${\mathbb C}{\mathbb P}^1$ produces
a Lie algebra structure on $H^0({\mathbb C}{\mathbb P}^1,\, T{\mathbb C}{\mathbb P}^1)$. The
resulting Lie algebra is isomorphic to $sl(2, {\mathbb C})\,=\, \text{Lie}({\rm SL}(2,
{\mathbb C}))\,=\, \text{Lie}({\rm PSL}(2, {\mathbb C}))$; an isomorphism between the two Lie
algebras is given by the standard action of ${\rm PSL}(2, {\mathbb C})$ on ${\mathbb C}{\mathbb P}^1$. Consider
the trivial vector bundle on ${\mathbb C}{\mathbb P}^1$
\begin{equation}\label{j3}
{\mathbb C}{\mathbb P}^1\times H^0({\mathbb C}{\mathbb P}^1,\, T{\mathbb C}{\mathbb P}^1)
\, \longrightarrow\, {\mathbb C}{\mathbb P}^1
\end{equation}
with fiber $H^0({\mathbb C}{\mathbb P}^1,\, T{\mathbb C}{\mathbb P}^1)$. Let
\begin{equation}\label{j2}
\beta\,\, :\,\, {\mathbb C}{\mathbb P}^1\times H^0({\mathbb C}{\mathbb P}^1,\, T{\mathbb C}{\mathbb P}^1)
\, \longrightarrow\, J^2(T{\mathbb C}{\mathbb P}^1)
\end{equation}
be the natural evaluation homomorphism. If a global section $w\, \in\,
H^0({\mathbb C}{\mathbb P}^1,\, T{\mathbb C}{\mathbb P}^1)$ vanishes of order three at
a point $x\, \in\, {\mathbb C}{\mathbb P}^1$, then $w$ actually vanishes identically; indeed, this
follows immediately from the fact that $\text{degree}(T{\mathbb C}{\mathbb P}^1)\,=\, 2$. Consequently, the homomorphism
$\beta$ in \eqref{j2} is fiberwise injective. This implies that $\beta$ in \eqref{j2} is an isomorphism because
$\dim H^0({\mathbb C}{\mathbb P}^1,\, T{\mathbb C}{\mathbb P}^1)\,=\, \text{rank}(J^2(T{\mathbb C}{\mathbb P}^1))$.
Using the isomorphism $\beta$ in \eqref{j2}, the Lie algebra structure on the
fibers of the trivial vector bundle in \eqref{j3} produces a Lie algebra structure on the fibers
of $J^2(T{\mathbb C}{\mathbb P}^1)$. The trivial connection on the vector bundle
in \eqref{j3} is the unique holomorphic connection on it. This connection induces a holomorphic
connection on $J^2(T{\mathbb C}{\mathbb P}^1)$ using $\beta$; let
\begin{equation}\label{e5}
\nabla^0
\end{equation}
be this induced holomorphic connection on $J^2(T{\mathbb C}{\mathbb P}^1)$. The connection $\nabla^0$
in \eqref{e5} is clearly compatible with the Lie algebra structure of the fibers of $J^2(T{\mathbb C}{\mathbb
P}^1)$, meaning the Lie bracket of two locally defined flat sections of $J^2(T{\mathbb C}
{\mathbb P}^1)$ is also flat.

The standard action of the M\"obius group $\text{PGL}(2,{\mathbb C})$ on
${\mathbb C}{\mathbb P}^1$ has a natural lift to $T{\mathbb C}{\mathbb P}^1$, and
hence $\text{PGL}(2,{\mathbb C})$ acts on both $J^2(T{\mathbb C}{\mathbb P}^1)$ and
$H^0({\mathbb C}{\mathbb P}^1,\, T{\mathbb C}{\mathbb P}^1)$. The connection $\nabla^0$ (see
\eqref{e5}) and the Lie algebra structure of the fibers of $J^2(T{\mathbb C}{\mathbb P}^1)$ are both 
$\text{PGL}(2,{\mathbb C})$--invariant. Equip
${\mathbb C}{\mathbb P}^1\times H^0({\mathbb C}{\mathbb P}^1,\, T{\mathbb C}{\mathbb P}^1)$ with the
diagonal action of $\text{PGL}(2,{\mathbb C})$. Then the homomorphism
$\beta$ in \eqref{j2} is evidently $\text{PGL}(2,{\mathbb C})$--equivariant.

Let $M$ be a connected Riemann surface equipped with a projective structure $\mathcal P$. 
Take any holomorphic coordinate function $(U,\, f)$ on $M$ which is compatible with the projective structure
$\mathcal P$. Using the differential, of the map $f$,
\begin{equation}\label{e6}
df\, :\, TU\, \longrightarrow\, f^*T{\mathbb C}{\mathbb P}^1,
\end{equation}
identify the jet bundle $J^2(TU)\,=\, J^2(TM)\big\vert_U$ with
$f^*J^2(T{\mathbb C}{\mathbb P}^1)\,=\, J^2(f^*T{\mathbb C}{\mathbb P}^1)$. Using this identification,
the Lie algebra structure on the fibers of $J^2(T{\mathbb C}{\mathbb P}^1)$ produces a Lie
algebra structure on the fibers of $J^2(TU)\,=\, J^2(TM)\big\vert_U$. This Lie algebra structure
on the fibers of $J^2(TM)\big\vert_U$ is actually independent of the choice of the coordinate function
$f$. Indeed, this follows immediately from the fact that the Lie algebra structure of the fibers of
$J^2(T{\mathbb C}{\mathbb P}^1)$ is preserved by the action of $\text{PGL}(2,{\mathbb C})$
on $J^2(T{\mathbb C}{\mathbb P}^1)$. Consequently,
we obtain a Lie algebra structure on the fibers of $J^2(TM)$. The Lie algebra structure
on every fiber of $J^2(TM)$ is isomorphic to $sl(2, {\mathbb C})$ because the Lie
algebra structure on every fiber of $J^2(T{\mathbb C}{\mathbb P}^1)$ is identified with
$sl(2, {\mathbb C})$. It should be clarified that the isomorphism between $sl(2, {\mathbb C})$ and a fiber of 
$J^2(TM)$ does depend on the choice of $f$.

Using the above identification $J^2(TM)\big\vert_U\,=\, f^*J^2(T{\mathbb C}{\mathbb P}^1)$
induced by $df$ in \eqref{e6}, the pulled back connection $f^*\nabla^0$ (see \eqref{e5}) produces a
holomorphic connection on $J^2(TM)\big\vert_U$. Again, this connection on $J^2(TM)\big\vert_U$
is independent of the choice of the coordinate function
$f$ because $\nabla^0$ is preserved by the action of $\text{PGL}(2,{\mathbb C})$ on
$J^2(T{\mathbb C}{\mathbb P}^1)$. Consequently, we obtain a holomorphic connection on $J^2(TM)$. Let
\begin{equation}\label{e7}
\nabla^{\mathcal P}_0
\end{equation}
be the holomorphic connection on $J^2(TM)$ constructed this way. Both $\nabla^{\mathcal P}_0$ and
the Lie algebra structure on the fibers of $J^2(TM)$ very much depend on the projective structure $\mathcal P$.

Let ${\mathbb P}(J^2(TM))\, \longrightarrow\, M$ be the projective bundle parametrizing the
lines in the fibers of $J^2(TM)$. The connection $\nabla^{\mathcal P}_0$ in \eqref{e7} produces a
holomorphic connection on this projective bundle. Recall that the fibers of $J^2(TM)$ are
Lie algebras isomorphic to $sl(2, {\mathbb C})$. Let
\begin{equation}\label{e10}
{\mathbb K}\,\, :\,\, J^2(TM)\otimes J^2(TM)\,\, \longrightarrow\,\, {\mathcal O}_M
\end{equation}
be the Killing form on the fibers of $J^2(TM)$. The locus of all $v\, \in\, {\mathbb P}(J^2(TM))$
such that ${\mathbb K}(v\otimes v)\,=\, 0$ is a projective bundle
\begin{equation}\label{e8}
\varphi\,\, :\,\, {\mathbf P}_{\mathcal P}\,\, \longrightarrow\,\, M
\end{equation}
of relative dimension one (the fibers are isomorphic to ${\mathbb C}{\mathbb P}^1$). The Lie
algebra structure on the fibers of $J^2(TM)$ is compatible with the connection $\nabla^{\mathcal P}_0$
(see \eqref{e7}), because the connection $\nabla^0$ in \eqref{e5} compatible with the Lie
algebra structure on the fibers of $J^2(T{\mathbb C}{\mathbb P}^1)$. Therefore, the
connection on ${\mathbb P}(J^2(TM))$ induced by $\nabla^{\mathcal P}_0$ actually preserves the
subbundle ${\mathbf P}_{\mathcal P}$ in \eqref{e8}. Let
\begin{equation}\label{e12}
\nabla^{\mathcal P}
\end{equation}
be the holomorphic connection on the fiber bundle ${\mathbf P}_{\mathcal P}$ induced by $\nabla^{\mathcal P}_0$.

The jet bundle $J^2(TM)$ fits in the short exact sequence of holomorphic vector bundles on $M$
\begin{equation}\label{e11}
0\, \longrightarrow\, K^{\otimes 2}_M\otimes TM\,=\, K_M \, \longrightarrow\, J^2(TM)
\, \longrightarrow\, J^1(TM) \, \longrightarrow\, 0.
\end{equation}
It can be shown that ${\mathbb K}(K_M\otimes K_M) \,=\, 0$ (see \eqref{e11}), where $\mathbb K$ is the
Killing form in \eqref{e10}. To see this, note that if $s$ and $s_1$ are two locally defined
holomorphic vector fields defined on a neighborhood of $x\, \in\, M$ such that $s$ vanishes
at $x$ of order two, and $s_1$ vanishes at $x$ of order $j$ (to clarify, $j$ may be
$0$, meaning $s_1(x)\, \not=\, 0$ is allowed), then the Lie bracket $[s,\, s_1]$ vanishes at $x$ of order
at least $j+1$. Now from the construction of the Lie algebra structure on the fibers of
$J^2(T{\mathbb C}{\mathbb P}^1)$ we conclude that the above property of $[s,\, s_1]$ ensures that
the subbundle $K_M$ in \eqref{e11} lies in the nilpotent locus of the fibers of $J^2(TM)$. Consequently,
we have ${\mathbb K}(K_M\otimes K_M) \,=\, 0$. This implies that $K_M$ in \eqref{e11} defines a
holomorphic section 
\begin{equation}\label{e9}
\sigma\,\,:\,\, M\,\, \longrightarrow\,\, {\mathbf P}_{\mathcal P}
\end{equation}
of the projective bundle in \eqref{e8}.

Let $T_{\varphi}\, \subset\, T {\mathbf P}_{\mathcal P}$ be the relative holomorphic tangent bundle for
the projection $\varphi$ in \eqref{e8}, so $T_{\varphi}$ is the kernel of the differential $d\varphi$
of $\varphi$. It is straightforward to check that its direct image is the
following: $$\varphi_*T_{\varphi}\,\,=\,\, J^2(TM).$$

The holomorphic connection $\nabla^{\mathcal P}$ on the fiber bundle
${\mathbf P}_{\mathcal P}$ (see \eqref{e12}) produces a projection
\begin{equation}\label{pn}
P_\nabla\,\, :\,\, T {\mathbf P}_{\mathcal P}\,\longrightarrow\, T_\varphi
\end{equation}
whose kernel is the horizontal distribution for $\nabla^{\mathcal P}$. Let
$$
d\sigma\,\, :\,\, TM \,\, \longrightarrow\, \, \sigma^* T {\mathbf P}_{\mathcal P}
$$
be the differential of the map $\sigma$ in \eqref{e9}. The holomorphic homomorphism
\begin{equation}\label{e13}
{\mathbf S}_\sigma\,\,:=\, \, (\sigma^* P_\nabla)\circ d\sigma\,\, :\,\, 
TM \,\, \longrightarrow\, \, \sigma^*T_\varphi,
\end{equation}
where $P_\nabla$ is the projection in \eqref{pn}, is the
second fundamental form of $\sigma$ for the connection $\nabla^{\mathcal P}$.

{}From the constructions of $\sigma$ and $\nabla^{\mathcal P}$ it follows immediately that
the homomorphism ${\mathbf S}_\sigma$ in \eqref{e3} is actually an isomorphism. Indeed, when
$M\,=\,{\mathbb C}{\mathbb P}^1$, then $${\mathbf P}_{\mathcal P}\,=\, {\mathbb C}{\mathbb P}^1
\times {\mathbb C}{\mathbb P}^1\, \stackrel{\varphi}{\longrightarrow}\,
{\mathbb C}{\mathbb P}^1$$ and $\nabla^{\mathcal P}$ is the trivial connection (in fact, there
is no nontrivial connection on this fiber bundle). The section $\sigma$ is the diagonal map
of ${\mathbb C}{\mathbb P}^1$. Hence in this case ${\mathbf S}_\sigma$ is an isomorphism. From this
it follows immediately that the homomorphism in \eqref{e13} is an isomorphism over coordinate charts
compatible with the projective structure $\mathcal P$ on $M$. Consequently, ${\mathbf S}_\sigma$
in \eqref{e13} is an isomorphism.

Consider the triple $({\mathbf P}_{\mathcal P},\, \nabla^{\mathcal P},\, \sigma)$ constructed
in \eqref{e8}, \eqref{e12} and \eqref{e9} respectively from the projective structure $\mathcal P$ on $M$.
The projective structure $\mathcal P$ can actually be recovered from this triple. This will be
explained below.

Let $\varphi\, :\, {\mathbf P}\, \longrightarrow\, M$ be a holomorphic fiber bundle whose typical fiber is
${\mathbb C}{\mathbb P}^1$, and let $\nabla$ be a holomorphic connection on $\mathbf P$. Let
$\sigma\,:\, M\, \longrightarrow\, {\mathbf P}$ be a holomorphic section of $\varphi$ such that the
second fundamental form
\begin{equation}\label{e14}
{\mathbf S}_\sigma\,\, :\,\, TM \,\, \longrightarrow\, \, \sigma^*T_\varphi
\end{equation}
(see \eqref{e13}) is an isomorphism.

Fix a point $x_0\,\in\, M$. Let
\begin{equation}\label{uc}
\varpi\,:\, (\widetilde{M}, \, \widetilde{x}_0)\, \longrightarrow\,
(M, \, x_0)
\end{equation}
be the corresponding universal covering. Then the flat ${\mathbb C}{\mathbb P}^1$--bundle
$(\varpi^*{\mathbf P},\, \varpi^*\nabla)$ on $\widetilde M$ is canonically identified with
$({\widetilde M}\times{\mathbf P}_{x_0},\, \nabla_0)$, where ${\mathbf P}_{x_0}$ is the fiber
of $\mathbf P$ over $x_0$ and $\nabla_0$ is the holomorphic
connection given by the trivialization of the trivial fiber bundle ${\widetilde M}
\times{\mathbf P}_{x_0} \, \longrightarrow\, \widetilde{M};$ this isomorphism
of flat ${\mathbb C}{\mathbb P}^1$--bundles
over $\widetilde M$ is given by parallel translations along paths
on $\widetilde M$ emanating from $\widetilde{x}_0$. Using the above identification of $\varpi^*{\mathbf P}$
with ${\widetilde M}\times{\mathbf P}_{x_0}$, we have the composition of maps
$$
\widetilde{M} \, \xrightarrow{\,\,\, \varpi^*\sigma\,\,}\, \varpi^*{\mathbf P} \,
\xrightarrow{\,\,\,\sim\,\,\,}\, {\widetilde M}\times{\mathbf P}_{x_0}\,\longrightarrow\, {\mathbf P}_{x_0},
$$
where the last map is the natural projection to ${\mathbf P}_{x_0}$, and the isomorphism
$\varpi^*{\mathbf P} \,\stackrel{\sim}{\longrightarrow}\,
{\widetilde M}\times{\mathbf P}_{x_0}$ is the one described above. Let
$$
\gamma\,:\, \widetilde{M} \,\longrightarrow\, {\mathbf P}_{x_0} \,=\, {\mathbb C}{\mathbb P}^1
$$
denote this composition of maps. The given condition that ${\mathbf S}_\sigma$ in \eqref{e14} is
an isomorphism implies that $\gamma$ is locally a biholomorphism.

Now construct holomorphic coordinate charts on $M$ of the form $\gamma\circ\varpi^{-1}$ (the covering
map $\varpi$ in \eqref{uc} is locally invertible). It is easy to see that all the transition functions
for these coordinate charts are M\"obius
transformations. Consequently, we obtain a projective structure on $M$.

The projective structure $\mathcal P$ is recovered back from the triple
$({\mathbf P}_{\mathcal P},\, \nabla^{\mathcal P},\, \sigma)$ (constructed
in \eqref{e8}, \eqref{e12} and \eqref{e9} respectively) in the above way.

\section{Transversal projective structures}\label{se 4}

\subsection{Foliation and projective structure}

Now let $M$ be a complex manifold of complex dimension $d+1$, with $d\, \geq\, 1$. Let
$$
{\mathcal F}\,\, \subset\,\, TM
$$
be a nonsingular holomorphic foliation of dimension $d$. Let
\begin{equation}\label{e15}
{\mathcal N}\,\, :=\,\, (TM)/{\mathcal F}
\end{equation}
be the normal bundle to $\mathcal F$; it is a holomorphic line bundle. Since the sheaf $\mathcal F$ is
closed under the operation of Lie bracket of vector fields on $M$, the Lie bracket operation
produces a holomorphic homomorphism
\begin{equation}\label{e16}
{\mathcal D}_N\,\, :\,\, {\mathcal N} \,\, \longrightarrow\,\, {\mathcal N}\otimes {\mathcal F}^*
\end{equation}
which satisfies the Leibniz identity. In other words, ${\mathcal D}_N$ is a holomorphic partial connection
on $\mathcal N$ in the direction of $\mathcal F$. This partial connection is flat, which a consequence of
the Jacobi identity for the Lie bracket of vector fields. It is classically known as the {\it Bott connection}.

A $\mathcal F$--chart on $M$ is
a pair of the form $(U,\, f)$, where $U\, \subset\, M$ is an open subset and
$$
f\, :\, U\, \longrightarrow\, {\mathbb C}{\mathbb P}^1
$$
is a holomorphic submersion, such that each connected component of any leaf of the foliation
$(U,\, {\mathcal F}\big\vert_U)$ is mapped, by $f$, to a point of ${\mathbb C}{\mathbb P}^1$. A
$\mathcal F$--atlas is a collection of $\mathcal F$--charts $\{(U_i,\, f_i)\}_{i\in I}$ on $M$ such
that $\bigcup_i U_i\,=\, M$.

A $\mathcal F$--projective structure on $M$ is defined by a $\mathcal F$--atlas
$\{(U_i,\, f_i)\}_{i\in I}$ on $M$ such that for any $i,\, j\, \in\, I$, and any nonempty connected
component $V\, \subset\, U_i\bigcap U_j$, there is a M\"obius transformation
${\mathcal T}^V_{j,i}\, \in\, {\rm Aut}({\mathbb C}{\mathbb P}^1)$ such that the following diagram
is commutative:
$$
\begin{matrix}
V & \xrightarrow{\,\,\, {\rm Id}_V\,\,} & V\\
\,\,\,\, \Big\downarrow f_i && \,\,\, \Big\downarrow f_j\\
{\mathbb C}{\mathbb P}^1& \xrightarrow{\,\,\, {\mathcal T}^V_{j,i}\,\,} & {\mathbb C}{\mathbb P}^1
\end{matrix}
$$
\cite{Sc}, \cite{LP}, \cite{IM}, \cite{Se}.

Two such $\mathcal F$--atlases $\{(U_i,\, f_i)\}_{i\in I}$ and $\{(U_i,\, f_i)\}_{i\in J}$ on $M$
define the same $\mathcal F$--projective structure if the $\mathcal F$--atlas on $M$ given
by their union $\{(U_i,\, f_i)\}_{i\in I\cup J}$ also satisfies the above condition.
A $\mathcal F$--chart $(U,\, f)$ is said to be compatible with the $\mathcal F$--projective structure
on $M$ defined by a $\mathcal F$--atlas $\{(U_i,\, f_i)\}_{i\in I}$ if the union
$\{(U_i,\, f_i)\}_{i\in I}\bigcup \{(U,\, f)\}$ defines a $\mathcal F$--projective structure on $M$.

Consider the normal bundle $\mathcal N$ in \eqref{e15}. The flat holomorphic partial connection
${\mathcal D}_N$ on $\mathcal N$ (see \eqref{e16}) induces a flat holomorphic partial connection on
$({\mathcal N}^*)^{\otimes 2}$. Let
\begin{equation}\label{e17}
H^0_{\mathcal F}(M,\, ({\mathcal N}^*)^{\otimes 2})\,\, \subset\,\, H^0(M,\, ({\mathcal N}^*)^{\otimes 2})
\end{equation}
be the subspace of flat holomorphic sections.

\begin{lemma}\label{lem1a}
If $M$ admits a $\mathcal F$--projective structure, then the space of all $\mathcal F$--projective
structures on $M$ is an affine space for the vector space $H^0_{\mathcal F}(M,\,
({\mathcal N}^*)^{\otimes 2})$ defined in \eqref{e17}.
\end{lemma}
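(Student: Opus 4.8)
The plan is to mimic, in the foliated setting, the classical fact that the space of projective structures on a Riemann surface is an affine space modelled on $H^0(M,\, K_M^{\otimes 2})$, using the local model of the difference of two projective structures via Schwarzian derivatives. First I would fix a $\mathcal F$--projective structure $\mathcal P_0$ on $M$ with defining atlas $\{(U_i,\, f_i)\}_{i\in I}$, and let $\mathcal P$ be any other $\mathcal F$--projective structure, represented by an atlas $\{(V_a,\, g_a)\}_{a\in A}$. After passing to a common refinement I may assume the two atlases have the same index set and the same open cover $\{U_i\}_{i\in I}$, so that on each $U_i$ we have two submersions $f_i,\, g_i\,:\,U_i\,\longrightarrow\,{\mathbb C}{\mathbb P}^1$ that are constant along the leaves of ${\mathcal F}|_{U_i}$, and such that $g_i\circ f_i^{-1}$ makes sense along the ${\mathbb C}{\mathbb P}^1$--directions. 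The key local object is the transverse Schwarzian derivative $S(f_i,\, g_i)$, which I would define as follows: restricting to a local transversal (a $d$-codimension-$0$ local quotient by the foliation, i.e. using that both $f_i$ and $g_i$ factor through the local leaf space), $f_i$ and $g_i$ descend to honest coordinate functions on a $1$-dimensional local quotient, and the usual Schwarzian of the change of coordinates between them is a quadratic differential on that quotient; pulling it back to $U_i$ gives a holomorphic section $s_i$ of $({\mathcal N}^*)^{\otimes 2}|_{U_i}$ that is flat for the Bott partial connection ${\mathcal D}_N$, because it is pulled back from the transversal.

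Next I would verify the cocycle compatibility. On $U_i\cap U_j$, both $f_i,\,f_j$ define the structure $\mathcal P_0$, so $f_i=\mathcal T^{ij}\circ f_j$ for a Möbius transformation $\mathcal T^{ij}$ (locally on connected components), and similarly $g_i=\mathcal S^{ij}\circ g_j$. Using the cocycle identity for the Schwarzian derivative, namely $S(h\circ k)=\bigl((S h)\circ k\bigr)\cdot (k')^2 + S k$, together with the vanishing of the Schwarzian of a Möbius transformation, I would get $s_i = s_j$ on overlaps after transporting through the transition maps; the point is exactly that the $(k')^2$ factor is absorbed by the way sections of $({\mathcal N}^*)^{\otimes 2}$ transform. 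Hence the $s_i$ glue to a global holomorphic section $s=s(\mathcal P,\, \mathcal P_0) \in H^0(M,\, ({\mathcal N}^*)^{\otimes 2})$, and by the above it lies in the flat subspace $H^0_{\mathcal F}(M,\, ({\mathcal N}^*)^{\otimes 2})$ of \eqref{e17}. This gives a well-defined map from pairs of $\mathcal F$--projective structures to $H^0_{\mathcal F}(M,\, ({\mathcal N}^*)^{\otimes 2})$ satisfying the cocycle relation $s(\mathcal P_1,\mathcal P_2)+s(\mathcal P_2,\mathcal P_3)=s(\mathcal P_1,\mathcal P_3)$.

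For freeness and transitivity of the resulting action I would argue as follows. Given $\mathcal P_0$ and a flat section $t\in H^0_{\mathcal F}(M,\, ({\mathcal N}^*)^{\otimes 2})$, I would construct a new $\mathcal F$--projective structure $\mathcal P_0 + t$ by solving, on each chart $U_i$, the Schwarzian equation $S(\widetilde f_i) = S(f_i) + t|_{U_i}$ on the local leaf space; this is a second-order ODE on the $1$-dimensional transversal whose solutions exist and are unique up to post-composition by Möbius transformations (the classical theory of the Schwarzian, applied on the local quotient and pulled back — here flatness of $t$ is what guarantees $t$ descends to each local leaf space). Post-composing appropriately one glues the $\widetilde f_i$ into a new $\mathcal F$--atlas whose transition functions are Möbius, i.e. a new $\mathcal F$--projective structure, and $s(\mathcal P_0+t,\, \mathcal P_0)=t$ by construction; conversely if $s(\mathcal P,\,\mathcal P_0)=0$ then locally $\widetilde f_i$ and $f_i$ differ by a Möbius map, so $\mathcal P=\mathcal P_0$. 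Combined with the cocycle relation this shows the space of $\mathcal F$--projective structures is a torsor (affine space) under $H^0_{\mathcal F}(M,\, ({\mathcal N}^*)^{\otimes 2})$.

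The main obstacle, and the step requiring the most care, is making precise the ``transverse'' Schwarzian and the solution of the transverse Schwarzian equation: one must check that all constructions that classically live on a Riemann surface can be carried out on local leaf spaces of ${\mathcal F}$ and that the resulting local data are automatically flat for the Bott connection ${\mathcal D}_N$, and conversely that a global ${\mathcal D}_N$--flat quadratic differential restricts to genuine quadratic differentials on each local leaf space; this is where the identification $({\mathcal N}^*)^{\otimes 2}$ with (the pullback of) $K^{\otimes 2}$ of the local transversal, compatibly with the partial connection, does the real work. Alternatively, and perhaps more cleanly, one can phrase the whole argument in terms of the flat ${\mathbb C}{\mathbb P}^1$--bundle picture of Section \ref{se3.2}: a $\mathcal F$--projective structure is the same as a foliated flat ${\mathbb P}^1$--bundle $({\mathbf P},\,\nabla)$ with a section $\sigma$ transverse to the horizontal foliation along ${\mathcal N}$, and deforming $\sigma$ within a fixed $({\mathbf P},\,\nabla)$, respectively deforming the flat structure, is controlled by $H^0_{\mathcal F}(M,\, \sigma^*T_\varphi) = H^0_{\mathcal F}(M,\, {\mathcal N}^{\otimes ?})$; I would use whichever of the two formulations keeps the bookkeeping lightest, but the Schwarzian computation is the conceptual heart in either case.
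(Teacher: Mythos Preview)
Your proposal is correct and follows essentially the same route as the paper: both arguments reduce to the classical affine structure of projective structures on a Riemann surface by descending the two $\mathcal F$--projective structures to local leaf spaces, taking the difference there as a holomorphic quadratic differential, and pulling it back to a Bott-flat section of $({\mathcal N}^*)^{\otimes 2}$. You are more explicit about the Schwarzian cocycle and about freeness/transitivity than the paper (which simply invokes the classical fact and sketches the two directions), but the content is the same.
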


\begin{proof}
The lemma follows immediately from the fact that the space of all
projective structures on a Riemann surface $Y$ is an affine space for $H^0(Y,\, K^{\otimes 2}_Y)$.
This will be briefly explained.

Let $P$ and $P_1$ be two $\mathcal F$--projective structures on $M$. Take a $\mathcal F$--chart
$(U,\, f)$ compatible with $P$, and also a holomorphic immersion
$$
\gamma\,:\, f(U) \, \longrightarrow\, {\mathbb C}{\mathbb P}^1
$$
such that $(U,\, \gamma\circ f)$ is a $\mathcal F$--chart compatible with $P_1$. While $f(U)$ has the
natural projective structure ${\mathcal P}$ given by its inclusion in ${\mathbb C}{\mathbb P}^1$, it
has another projective structure ${\mathcal P}_1$ given by $\gamma$ (the natural projective structure of
$\gamma(f(U))$ induces a projective structure on $f(U)$). Consequently, we get a holomorphic
quadratic differential
$$
\omega\, \in\, H^0(f(U),\, K^{\otimes 2}_{f(U)})
$$
for which ${\mathcal P}_1\,=\, {\mathcal P}+\omega$. Now it is easy to see that $f^*\omega$ is a
holomorphic section of $({\mathcal N}^*)^{\otimes 2}\big\vert_U$
which is flat with respect to the partial connection. It is also straight-forward to check that
this holomorphic section of $({\mathcal N}^*)^{\otimes 2}\big\vert_U$ is actually independent of the
choices of $f$ and $\gamma$. Consequently, we get an element of $H^0_{\mathcal F}(M,\,
({\mathcal N}^*)^{\otimes 2})$.

The converse is equally straightforward. Take $P$ and $(U,\, f)$ as above, and take any
holomorphic section $\theta$ of $({\mathcal N}^*)^{\otimes 2}\big\vert_U$
which is flat with respect to the partial connection. Then $\theta$ produces a section
$$
\theta'\, \in\, H^0(U,\, K^{\otimes 2}_U).
$$
Using $\theta'$ and the natural projective structure on $f(U)$
we get a new projective structure on $f(U)$. This new projective structure on $f(U)$ and the
map $f$ together give a new $\mathcal F$--projective structures on $U$.
\end{proof}

Unlike in the case of Riemann surfaces, there may not be any $\mathcal F$--projective structure on $M$.

\subsection{Another description of $\mathcal F$-projective structures}

As before, $M$ is a complex manifold equipped with a holomorphic foliation $\mathcal F$ of co-rank one. 
Let
\begin{equation}\label{vp}
\varpi\,\, :\,\, \mathbf{P}\,\, \longrightarrow\,\, M
\end{equation}
be a holomorphic fiber bundle
whose typical fiber is ${\mathbb C}{\mathbb P}^1$. Let
\begin{equation}\label{tv}
T_\varpi\, :=\, \text{kernel}(d\varpi)\, \subset\, T\mathbf{P}
\end{equation}
be the relative holomorphic tangent bundle for the above projection $\varpi$. A holomorphic connection
on $\mathbf{P}$ is a holomorphic homomorphism
\begin{equation}\label{dn}
\nabla\,\, :\,\, T\mathbf{P}\,\,\longrightarrow\,\, T_\varpi
\end{equation}
such that composition of maps
$$
T_\varpi\, \hookrightarrow\, T\mathbf{P}\, \xrightarrow{\,\,\, \nabla\,\,}\, T_\varpi
$$
is the identity map of $T_\varpi$. The holomorphic connection $\nabla$ is called integrable if
the distribution $\text{kernel}(\nabla) \, \subset\, T\mathbf{P}$ is integrable.

\begin{remark}\label{rem1}
The ${\mathbb C}{\mathbb P}^1$--bundle $\mathbf{P}$ on $M$ gives a holomorphic principal 
$\text{PGL}(2,{\mathbb C})$--bundle $E_{\rm PGL}$ on $M$. The fiber of $E_{\rm PGL}$ over any
point $m\, \in\, M$ is the space of all holomorphic isomorphisms from ${\mathbb C}{\mathbb P}^1$ to
the fiber $\mathbf{P}_m$. Giving a holomorphic connection on $\mathbf{P}$ is equivalent to
giving a holomorphic connection on the principal $\text{PGL}(2,{\mathbb C})$--bundle $E_{\rm PGL}$.
To see this, first note that the fiber bundle $\mathbf{P}$ is canonically identified with the
one associated to the principal $\text{PGL}(2,{\mathbb C})$--bundle $E_{\rm PGL}$ for the natural action of
$\text{Aut}({\mathbb C}{\mathbb P}^1) \,=\, \text{PGL}(2,{\mathbb C})$ on ${\mathbb C}{\mathbb P}^1$.
The identification is given by the map $E_{\rm PGL}\times {\mathbb C}{\mathbb P}^1\, \longrightarrow\,
{\mathbf P}$ that sends any $(\rho,\, z)\, \in\,E_{\rm PGL}\times {\mathbb C}{\mathbb P}^1$ to $\rho(z)$.
Therefore, a holomorphic connection on the principal $\text{PGL}(2,{\mathbb C})$--bundle $E_{\rm PGL}$
produces a holomorphic connection on the associated fiber bundle $\mathbf{P}$. Conversely, the direct image
$\varpi_* (T\mathbf{P}) \, \longrightarrow\, M$ (see \eqref{vp}) is identified with the Atiyah bundle
$\text{At}(E_{\rm PGL})$ of $E_{\rm PGL}$, and hence a holomorphic connection on $\mathbf{P}$ produces a
holomorphic splitting of the Atiyah exact sequence for $E_{\rm PGL}$. Therefore, a
holomorphic connection on $\mathbf{P}$ gives a holomorphic
connection on the principal $\text{PGL}(2,{\mathbb C})$--bundle $E_{\rm PGL}$. (See
\cite{At} for Atiyah bundle and the definition of connection using it.)$\hfill{\Box}$
\end{remark}

Let $\nabla$ be an integrable holomorphic connection on the holomorphic ${\mathbb C}{\mathbb P}^1$--bundle
$\mathbf{P}$ on $M$. Let
$$
\sigma\, :\, M\, \longrightarrow\, \mathbf{P}
$$
be a holomorphic section of the fiber bundle, so $\varpi\circ\sigma\,=\, {\rm Id}_M$. Consider the
differential
$$
d\sigma\, :\, TM\, \longrightarrow\, \sigma^*T {\mathbf P}
$$
of the map $\sigma$. We have the homomorphism
\begin{equation}\label{e22}
\widetilde{\mathbf S}_\sigma\, :\, TM \, \longrightarrow\, \sigma^*T_\varpi
\end{equation}
given by the following composition of maps:
$$
TM\,\, \xrightarrow{\,\,\, d\sigma\,\,}\,\, \sigma^*T {\mathbf P}\,\, \xrightarrow{\,\,\, \sigma^*\nabla\,\,}
\,\, \sigma^*T_\varpi
$$
(see \eqref{dn}).
If $$\widetilde{\mathbf S}_\sigma({\mathcal F})\,=\, 0,$$ where $\widetilde{\mathbf S}_\sigma$ is the
homomorphism in \eqref{e22}, then $\widetilde{\mathbf S}_\sigma$ produces a homomorphism
\begin{equation}\label{e23}
{\mathbf S}_\sigma\, :\, {\mathcal N}\, :=\, (TM)/{\mathcal F} \, \longrightarrow\, \sigma^*T_\varpi.
\end{equation}

We will call $\sigma$ a $\mathcal F$--section, of the holomorphic ${\mathbb C}{\mathbb P}^1$--bundle
$\mathbf P$ with holomorphic connection $\nabla$, if $\widetilde{\mathbf S}_\sigma({\mathcal F})\,=\, 0$.

\begin{lemma}\label{lem2a}
Giving a $\mathcal F$--projective structure on the foliated manifold $(M,\, {\mathcal F})$ is equivalent
to giving a triple $({\mathbf P},\, \nabla,\, \sigma)$, where
\begin{itemize}
\item $\varpi\, :\, \mathbf{P}\, \longrightarrow\, M$ is a holomorphic fiber bundle
whose typical fiber is ${\mathbb C}{\mathbb P}^1$,

\item $\nabla$ is an integrable holomorphic connection on $\mathbf P$, and

\item $\sigma\, :\, M\, \longrightarrow\, \mathbf{P}$ is a holomorphic 
$\mathcal F$--section of $\varpi$ such that the homomorphism ${\mathbf S}_\sigma\, :\, {\mathcal N}\,
\longrightarrow\, \sigma^*T_\varpi$ given by $\sigma$ (see \eqref{e23}) is an isomorphism.
\end{itemize}
\end{lemma}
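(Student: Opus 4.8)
The plan is to establish the equivalence in Lemma~\ref{lem2a} by constructing the two directions of the correspondence and checking they are mutually inverse.

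First I would construct, from a $\mathcal F$--projective structure $P$ on $(M,\,\mathcal F)$, the triple $({\mathbf P},\,\nabla,\,\sigma)$. This is the foliated analogue of the construction in Section~\ref{se3.2}. Starting from a $\mathcal F$--atlas $\{(U_i,\,f_i)\}_{i\in I}$ compatible with $P$, each $f_i\,:\,U_i\,\longrightarrow\,{\mathbb C}{\mathbb P}^1$ is a holomorphic submersion with the leaves in its fibers, so its differential $df_i$ factors through a surjection ${\mathcal N}\big\vert_{U_i}\,\longrightarrow\,f_i^*T{\mathbb C}{\mathbb P}^1$, which is in fact an isomorphism since $\mathcal N$ has rank one. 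Thus on each $U_i$ one may pull back the bundle $J^2(T{\mathbb C}{\mathbb P}^1)$, its Lie algebra structure, and the canonical connection $\nabla^0$ of \eqref{e5} via $f_i$; because the transition maps ${\mathcal T}^V_{j,i}$ are M\"obius transformations and $\nabla^0$ together with the fiberwise Lie bracket and Killing form are $\text{PGL}(2,\mathbb C)$--invariant, these local objects glue to a global bundle $\mathcal J$ on $M$ with a connection, a fiberwise Lie algebra structure, and a Killing form. Taking the null locus of the Killing form inside ${\mathbb P}(\mathcal J)$ gives the ${\mathbb C}{\mathbb P}^1$--bundle ${\mathbf P}$, with induced connection $\nabla$; integrability of $\nabla$ holds because locally $\nabla = f_i^*\nabla^0$ and $\nabla^0$ is flat. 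The section $\sigma$ comes, exactly as around \eqref{e9}, from the line subbundle of $\mathcal J$ corresponding to $({\mathcal N}^*)^{\otimes2}\otimes{\mathcal N}$, which lies in the nilpotent (hence null) locus; the second fundamental form computation of \eqref{e13}, performed fiberwise over a $\mathcal F$--chart where everything reduces to the ${\mathbb C}{\mathbb P}^1$ case, shows ${\mathbf S}_\sigma$ is an isomorphism and that $\widetilde{\mathbf S}_\sigma(\mathcal F)=0$, i.e. $\sigma$ is a $\mathcal F$--section.

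Next I would construct the converse: from a triple $({\mathbf P},\,\nabla,\,\sigma)$ satisfying the three conditions, I would recover a $\mathcal F$--projective structure. Here I would mimic the universal-cover/developing-map argument at the end of Section~\ref{se3.2}, but carried out transversally. Since $\nabla$ is integrable, $\text{kernel}(\nabla)\subset T{\mathbf P}$ defines a foliation $\mathcal G$ on $\mathbf P$ transverse to the fibers, and its leaves give local flat trivializations ${\mathbf P}\big\vert_U\,\cong\,U\times{\mathbb C}{\mathbb P}^1$; composing the section $\sigma$ with the projection to the ${\mathbb C}{\mathbb P}^1$--factor of such a trivialization yields local maps $f_U\,:\,U\,\longrightarrow\,{\mathbb C}{\mathbb P}^1$. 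The condition $\widetilde{\mathbf S}_\sigma(\mathcal F)=0$ forces $f_U$ to be constant along leaves of $\mathcal F$, so $(U,\,f_U)$ is a $\mathcal F$--chart; the condition that ${\mathbf S}_\sigma$ is an isomorphism forces $f_U$ to be a submersion. Two such trivializations differ by a locally constant ${\rm PGL}(2,\mathbb C)$--valued function (the monodromy of $\nabla$), so the transition maps between the $f_U$ are M\"obius, giving a $\mathcal F$--projective structure.

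Finally I would verify the two constructions are mutually inverse: applying the triple-construction to the $\mathcal F$--projective structure produced from a triple returns the original triple up to canonical isomorphism (one checks that ${\mathbf P}$ reconstructed from the recovered atlas is canonically the null-locus bundle inside ${\mathbb P}(J^2)$ of the original, using the identification $\varphi_*T_\varphi=J^2(T{\mathbb C}{\mathbb P}^1)$ in the fibers), and conversely the atlas recovered from the triple built out of an atlas agrees with the original up to the equivalence defining $\mathcal F$--projective structures. The main obstacle is the gluing/naturality bookkeeping rather than any single hard estimate: one must check carefully that the pulled-back structures $f_i^*(\text{Lie bracket},\,\nabla^0,\,{\mathbb K})$ genuinely agree on overlaps --- this is precisely where the $\text{PGL}(2,\mathbb C)$--invariance recorded in Section~\ref{se3.2} is used --- and that the developing-map construction is independent of the choice of flat local trivialization. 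Since all of these reduce fiberwise or chartwise to statements already established for a single Riemann surface in Section~\ref{se3.2}, each verification is routine, and I would only spell out the compatibility with the $\text{PGL}(2,\mathbb C)$--action in detail.
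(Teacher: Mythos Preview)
Your proposal is correct and follows exactly the route the paper intends: the paper's own proof simply reads ``The proof is identical to the constructions in Section~\ref{se3.2}; we omit the details,'' and what you have written is precisely the foliated transcription of those constructions, with the $\text{PGL}(2,\mathbb C)$--invariance of $(\nabla^0,\,\text{Lie bracket},\,{\mathbb K})$ supplying the gluing and the developing-map argument giving the converse.
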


\begin{proof}
The proof is identical to the constructions in Section \ref{se3.2}; we omit the details.
\end{proof}

\section{$\mathcal F$--projective structure on the torus}\label{se 5}

\subsection{Properties of unique $\mathcal F$--projective structure}\label{se4.1}

Consider the holomorphic foliation $\mathcal F$ on $C\times X$ in Section \ref{se2}. We will investigate the
existence of $\mathcal F$--projective structures on $C\times X$. As in \eqref{e15}, let
\begin{equation}\label{e18}
{\mathcal N}\,\, :=\,\, T(C\times X)/{\mathcal F}\,\, \longrightarrow\,\, C\times X
\end{equation}
be the normal bundle. Let
\begin{equation}\label{e18b}
q\,\, :\,\, T(C\times X)\,\, \longrightarrow\,\, {\mathcal N}
\end{equation}
be the corresponding quotient map.

\begin{proposition}\label{prop1}
There can be at most one $\mathcal F$--projective structures on $C\times X$.
\end{proposition}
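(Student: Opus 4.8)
By Lemma~\ref{lem1a}, once we know that a $\mathcal F$--projective structure on $C\times X$ exists, the set of all such structures is an affine space modelled on the vector space $H^0_{\mathcal F}(C\times X,\, ({\mathcal N}^*)^{\otimes 2})$ of Bott--flat holomorphic sections. So the plan is to prove that this vector space is zero; the proposition then follows immediately, since an affine space over the zero vector space has at most one point.

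First I would describe $\mathcal N$ explicitly. From the construction in Section~\ref{se2}, the foliation $\mathcal F$ is the kernel of the line subbundle $\vartheta(L)\,\subset\,\Omega^1_{C\times X}$, so dualizing we get an identification of $\mathcal N$ with a quotient of $T(C\times X)$ whose dual $\mathcal N^*$ is the line subbundle $\vartheta(L)\,\cong\, L\,=\,\phi_C^*\mathcal O_C(-\underline x)$ of $\Omega^1_{C\times X}$ (here I use that $\vartheta$ is a fibrewise injective bundle map, as shown in the excerpt). Hence $\mathcal N\,\cong\, L^*\,=\,\phi_C^*\mathcal O_C(\underline x)$ and $(\mathcal N^*)^{\otimes 2}\,\cong\, \phi_C^*\mathcal O_C(-2\underline x)$. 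Using the Künneth formula together with $H^0(X,\mathcal O_X)\,=\,\mathbb C$, we get $H^0(C\times X,\, \phi_C^*\mathcal O_C(-2\underline x))\,=\, H^0(C,\, \mathcal O_C(-2\underline x))$, which is $0$ since $\mathcal O_C(-2\underline x)$ has negative degree $-2d<0$ on the genus-one curve $C$. Already the ambient space $H^0(C\times X,\, ({\mathcal N}^*)^{\otimes 2})$ is zero, so a fortiori the subspace of Bott--flat sections vanishes, and we are done.

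The one point that needs care — and which I expect to be the main (though modest) obstacle — is pinning down the isomorphism class of $\mathcal N$ unambiguously, i.e.\ checking that $\mathcal N^*$ really is $\vartheta(L)\,\cong\, L$ and not some twist of it. This is where the disjointness of $\underline x$ and $\underline y$ and the explicit vanishing loci of $\omega'$ and $\beta'$ computed in Section~\ref{se2} are used: because $\vartheta$ is nowhere-vanishing as a bundle map $L\to\Omega^1_{C\times X}$, its image is a subbundle abstractly isomorphic to $L$, and then $\mathcal N\,=\,(T(C\times X))/\ker(\vartheta)$ is dual to that subbundle, giving $\mathcal N\,\cong\, L^*$. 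An alternative bookkeeping route, avoiding any reference to the existence of a $\mathcal F$--projective structure, is to invoke Lemma~\ref{lem2a}: if $({\mathbf P},\nabla,\sigma)$ and $({\mathbf P}',\nabla',\sigma')$ both present $\mathcal F$--projective structures, the two flat $\mathbb P^1$--bundles have the same monodromy (a standard argument: restrict to a generic leaf, or use that the developing maps differ by a Möbius transformation on overlaps), hence are isomorphic as flat bundles carrying the section, and the difference of the two structures lives in $H^0_{\mathcal F}(C\times X,\,({\mathcal N}^*)^{\otimes 2})\,=\,0$ computed above. Either way the numerical vanishing $H^0(C,\mathcal O_C(-2\underline x))=0$ is the crux, and everything else is formal.
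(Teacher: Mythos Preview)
Your proof is correct and follows the same strategy as the paper: reduce via Lemma~\ref{lem1a} to the vanishing of $H^0(C\times X,(\mathcal N^*)^{\otimes 2})$, then kill this by a negative-degree argument on the $C$-factor. The paper carries this out by restricting to a slice $C^z=C\times\{z\}$ and computing $\deg(\mathcal N\vert_{C^z})=d$ from the divisor $\underline y$ of the natural map $TC^z\to\mathcal N^z$, whereas you identify $\mathcal N^*\cong\vartheta(L)\cong L=\phi_C^*\mathcal O_C(-\underline x)$ globally and invoke K\"unneth; these are minor variants of the same argument, and your ``alternative bookkeeping route'' paragraph is superfluous.
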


\begin{proof}
In view of Lemma \ref{lem1a}, it suffices to show that
\begin{equation}\label{e19}
H^0_{\mathcal F}(C\times X,\, ({\mathcal N}^*)^{\otimes 2})\,\, =\,\,0,
\end{equation}
where $\mathcal N$ is the line bundle in \eqref{e18}.

To prove \eqref{e19}, fix a point $z\, \in\, X$. Consider the curve
\begin{equation}\label{e26}
C^z\, \,:=\,\, C\times\{z\}\,\, \subset\,\, C\times X.
\end{equation}
Let
\begin{equation}\label{e27}
{\mathcal N}^z\, \longrightarrow\, C
\end{equation}
be the restriction of ${\mathcal N}$ to the curve $C^z$ in \eqref{e26}. We will prove the following:
\begin{equation}\label{e20}
\text{degree}({\mathcal N}^z)\,\,=\,\, d
\end{equation}
(see \eqref{e1} for $d$).

Consider the composition of homomorphisms
$$
TC^z \, \hookrightarrow\, T(C\times X)\vert_{C^z} \, \xrightarrow{\,\,\, q\big\vert_{C^z}\,\,}\,
{\mathcal N}^z,
$$
where $q$ is the map in \eqref{e18b} and ${\mathcal N}^z$ is defined in \eqref{e27}; let
\begin{equation}\label{e20b}
\Phi^z\, :\, TC^z \, \longrightarrow\, {\mathcal N}^z
\end{equation}
be this composition of map. From the construction of $\mathcal F$ it follows immediately that
\begin{equation}\label{e20c}
\text{Div}(\Phi^z)\,=\, \text{Div}(\omega)\,=\,
\underline{y}\times \{z\} \, \subset\, C^z\, \subset\, C\times X
\end{equation}
(see \eqref{en2} for $\omega$). Since $TC^z$ is a holomorphically trivial
line bundle, from \eqref{e20c} it follows that \eqref{e20} holds. Now from
\eqref{e20} it follows immediately that $H^0(C^z,\, (({\mathcal N}^z)^{\otimes 2})^*)\,=\,0$, and this
implies that \eqref{e19} holds. As noted before, \eqref{e19} completes the proof of the proposition.
\end{proof}

Assume that there is a $\mathcal F$--projective structure on $C\times
X$. In view of Proposition \ref{prop1}, this implies that there is exactly one $\mathcal F$--projective
structure on $C\times X$. Let
\begin{equation}\label{e21}
P_0
\end{equation}
denote this $\mathcal F$--projective structure on $C\times X$. Let
\begin{equation}\label{e24}
({\mathbf P},\, \nabla,\, \sigma)
\end{equation}
be the triple corresponding to $P_0$ in \eqref{e21} (see Lemma \ref{lem2a}). Let
\begin{equation}\label{e24b}
\varpi\,\, :\,\, \mathbf{P}\,\, \longrightarrow\,\,C\times X
\end{equation}
be the natural projection.

Fix a point $x_0\, \in\, X$. Let
\begin{equation}\label{eg}
G\,\, \subset\,\, \text{Aut}(X)
\end{equation}
be the unique maximal connected subgroup of the group of all holomorphic automorphisms of $X$.
So $G$ is identified with $X$ by sending any $g\, \in\, G$ to $g(x_0)\, \in\, X$.
The natural action of $G$ on $X$ and the trivial action of $G$ on $C$ together produce an action of
$G$ on $C\times X$. The foliation $\mathcal F$ is evidently
preserved by this action of $G$ on $C\times X$ (this follows immediately from the fact that
$\beta$ in \eqref{e2} is preserved by the action of $G$ on $X$). Since
the $\mathcal F$--projective structure in \eqref{e21} is unique, the triple $({\mathbf P},\, \nabla,\, \sigma)$
in \eqref{e24} is preserved by the action of $G$.

That the pair $({\mathbf P},\, \sigma)$ is $G$--equivariant means the following:
There is a holomorphic fiber bundle
\begin{equation}\label{e28}
\rho\, :\, \mathbb{P} \, \longrightarrow\, C
\end{equation}
whose typical fiber is ${\mathbb C}{\mathbb P}^1$, and there is a holomorphic section
\begin{equation}\label{e28b}
\eta\,\, :\,\, C\,\, \longrightarrow\,\, \mathbb{P}
\end{equation}
of $\rho$, such that
\begin{equation}\label{e29}
(\phi_C^*\mathbb{P},\, \phi^*_C\eta)\,=\, ({\mathbf P},\, \sigma),
\end{equation}
where $\phi_C$ is the projection in \eqref{e3}. The $G$--equivariance
condition of $\nabla$ is a bit more involved; we first need to set up notation for describing it.

Let $T_\rho\, \subset\, T\mathbb{P}$ be the relative holomorphic tangent bundle for the projection
$\rho$ in \eqref{e28}, so $T_\rho$ is the kernel of the differential
\begin{equation}\label{e30}
d\rho\, :\, T\mathbb{P} \, \longrightarrow\, \rho^*TC
\end{equation}
of $\rho$. The direct image
\begin{equation}\label{e31}
\rho_*T_\rho\,\, \longrightarrow\,\, C
\end{equation}
is a rank three holomorphic vector bundle whose fibers are Lie algebras isomorphic
to $sl(2,{\mathbb C})$; the Lie algebra structure is given by the Lie bracket of vertical vector
fields for $\rho$. Recall from Remark \ref{rem1} that $\mathbb{P}$ gives a holomorphic principal
$\text{PGL}(2,{\mathbb C})$--bundle on $C$. The Lie algebra bundle $\rho_*T_\rho$ in \eqref{e31}
is in fact the adjoint vector bundle of this principal $\text{PGL}(2,{\mathbb C})$--bundle.

Take any
\begin{equation}\label{e32}
\theta\,\, \in\, \, H^0(C,\, (\rho_*T_\rho)\otimes_{\mathbb C} H^0(X,\, K_X))\,\,=\,\,
H^0(C,\, (\rho_*T_\rho))\otimes_{\mathbb C} H^0(X,\, K_X).
\end{equation}
Using $(d\phi_X)^*\, :\, \phi^*K_X\, \longrightarrow\, \Omega^1_{C\times X}\,=\, (\phi^*_CK_C) \oplus
(\phi^*_X K_X)$ (see \eqref{e3}) we have
$$
H^0(X,\, K_X) \, \hookrightarrow\, H^0(C\times X,\, \Omega^1_{C\times X}).
$$
Using this inclusion map together with the isomorphism $\phi_C^*\mathbb{P}\, \stackrel{\sim}{\longrightarrow}\,
{\mathbf P}$ in \eqref{e29}, the section $\phi^*_C \theta$ (see \eqref{e32}) produces a section
\begin{equation}\label{e33}
\widetilde{\theta}\, \,\in\,\, H^0(C\times X,\, \varpi_* T_\varpi)\otimes H^0(C\times X,\,
\Omega^1_{C\times X})\,=\, H^0(C\times X,\, \varpi_*T_\varpi\otimes \Omega^1_{C\times X}),
\end{equation}
where $T_\varpi\, \subset\, T{\mathbf P}$, as in \eqref{tv}, is the kernel of $d\varpi$ for
$\varpi$ in \eqref{e24b}; note that
$\phi^*_C \rho_* T_\rho\,=\, \varpi_*T_\varpi$, which follows from the isomorphism
$\phi_C^*\mathbb{P}\, \stackrel{\sim}{\longrightarrow}\, {\mathbf P}$ in \eqref{e29}.

Recall that $\varpi_* T_\varpi\, \longrightarrow\, C\times X$ is the adjoint vector bundle of the holomorphic 
principal $\text{PSL}(2,{\mathbb C})$--bundle on $C\times X$ given by the ${\mathbb C}{\mathbb P}^1$--fiber 
bundle $\mathbf P$ (see Remark \ref{rem1}). Therefore, if $\nabla^1$ is a holomorphic connection on the fiber 
bundle ${\mathbf P}$, then $\nabla^1+\widetilde{\theta}$ is also a holomorphic connection on the fiber bundle 
${\mathbf P}$, where $\widetilde{\theta}$ is the section in \eqref{e33}.

The $G$--equivariance (see \eqref{eg}) condition of the connection $\nabla$ (see \eqref{e24}) says that
there is a pair
\begin{equation}\label{e35}
(\nabla',\, \theta),
\end{equation}
where
\begin{itemize}
\item $\nabla'$ is a holomorphic connection on the fiber bundle $\rho\, :\,\mathbb{P}\, \longrightarrow\, C$
(see \eqref{e28}), and

\item $\theta \, \in\, H^0(C,\, (\rho_*T_\rho))\otimes_{\mathbb C} H^0(X,\, K_X)$ (see \eqref{e32}),
\end{itemize}
such that
\begin{equation}\label{e34}
\nabla\,\,=\,\, \phi^*_C\nabla' + \widetilde{\theta},
\end{equation}
where $\widetilde{\theta}$ is constructed
from $\theta$ as in \eqref{e33}; using the isomorphism $\phi_C^*\mathbb{P}\, \stackrel{\sim}{\longrightarrow}
\, {\mathbf P}$ in \eqref{e29} the connection $\phi^*_C\nabla'$ on $\phi_C^*\mathbb{P}$ is considered ---
in \eqref{e34} --- as a holomorphic connection on the fiber bundle $\mathbf P$, ad hence
$\phi^*_C\nabla' + \widetilde{\theta}$ is also a holomorphic connection on the fiber bundle $\mathbf P$.

We note that $\nabla'$ and $\theta$ are uniquely determined by $\nabla$ and \eqref{e34}.

\begin{remark}\label{rem2}
The triple $({\mathbf P},\, \nabla,\, \sigma)$ in \eqref{e24} actually determines the foliation
$\mathcal F$ uniquely. To see this, we first observe that for any $x\, \in\, X$ and $y_i$,
$i\, \in\, \{1,\, \cdots, \, d\}$ (see \eqref{e1}), the line
$${\mathcal F}_{(y_i,x)}\,\, \subset\,\, T_{(y_i,x)}(C\times X)$$ coincides with
$T_{y_i} C\, \subset\, T_{(y_i,x)}(C\times X)$. Consequently, the homomorphism
$$
\widetilde{\mathbf S}_\sigma\,:\, T(C\times X)\, \longrightarrow\,\sigma^* T_\varpi
$$
(see \eqref{e22} and \eqref{e23}) vanishes on the subspace $T_{y_i} C\, \subset\, T_{(y_i,x)}(C\times X)$;
recall that the third condition in Lemma \ref{lem2a} says that
$\sigma$ is a holomorphic $\mathcal F$--section of $\varpi$ which implies that
$\widetilde{\mathbf S}_\sigma (T_{y_i} C)\,=\, 0$.

For any point $y\, \in\, C\setminus \{y_1,\, \cdots,\, y_d\}$, the line
${\mathcal F}_{(y,x)}\,\, \subset\,\, T_{(y,x)}(C\times X)$ does not coincide with
$T_{y} C\, \subset\, T_{(y_i,x)}(C\times X)$. Since ${\mathbf S}_\sigma$ in
the third condition in Lemma \ref{lem2a} is an isomorphism, this implies that
$\widetilde{\mathbf S}_\sigma (T_{y} C)\,\not=\, 0$.

Therefore, the subset $\{y_1,\, \cdots,\, y_d\}\, \subset\, Y$ is determined by
the triple $({\mathbf P},\, \nabla,\, \sigma)$.

Take a point $c\, \in\, C$ and restrict the triple $({\mathbf P},\, \nabla,\, \sigma)$
over $\{c\}\times X \, \subset\, C\times X$. Note that the line subbundle
${\mathcal F}\big\vert_{\{c\}\times X}\, \subset\, T(C\times X)\big\vert_{\{c\}\times X}$
coincides with $$TX\,=\, T(\{c\}\times X)\, \subset\, T(C\times X)\big\vert_{\{c\}\times X}$$
if and only of $c\, \in\, \{x_1,\, \cdots,\, x_d\}$ (see \eqref{e1}). Consequently,
$\sigma\big\vert_{\{c\}\times X}$ is a flat section of $({\mathbf P},\, \nabla)\big\vert_{\{c\}\times X}$
if and only of $c\, \in\, \{x_1,\, \cdots,\, x_d\}$.

{}From these observations we conclude that the triple $({\mathbf P},\, \nabla,\, \sigma)$ in \eqref{e24}
determines the foliation $\mathcal F$ uniquely.$\hfill{\Box}$
\end{remark}

Actually, it can be shown that every holomorphic connection on $\mathbf P$ is of the form
given in \eqref{e34}. In other words, every holomorphic connection on $\phi_C^*\mathbb{P}$ is automatically
$G$--equivariant. Even though it is not needed here, this statement is proved below.

\begin{lemma}\label{lem3}
Let ${\mathbb P}\, \longrightarrow\, C$ be a holomorphic fiber bundle whose typical fiber
is ${\mathbb C}{\mathbb P}^1$. Then every holomorphic connection on the fiber bundle
$\phi_C^*\mathbb{P}\, \longrightarrow\, C\times X$ is of the form given in \eqref{e34}.
\end{lemma}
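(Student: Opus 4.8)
The plan is to reduce the statement to a cohomological computation on $C\times X$, using the fact that the space of holomorphic connections on a principal bundle, when nonempty, is an affine space over the space of global holomorphic $1$--forms with values in the adjoint bundle.

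First I would translate the problem into the language of Remark \ref{rem1}: holomorphic connections on the ${\mathbb C}{\mathbb P}^1$--bundle ${\mathbf P}\,=\,\phi_C^*{\mathbb P}$ are the same as holomorphic connections on the associated principal $\text{PGL}(2,{\mathbb C})$--bundle, whose adjoint bundle is $\varpi_*T_\varpi\,=\,\phi^*_C(\rho_*T_\rho)$ (see \eqref{e31}, \eqref{e24b}, \eqref{e29}). If ${\mathbf P}$ admits no holomorphic connection there is nothing to prove, so fix a holomorphic connection $\nabla^1$ on ${\mathbf P}$. I would then produce the ``$C$--part'' $\nabla'$ directly by restriction rather than by invoking an abstract existence theorem. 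The submanifold ${\mathbf P}\big\vert_{C\times\{x_0\}}\,\subset\,{\mathbf P}$ is identified, via \eqref{e29}, with the total space of $\rho\,:\,{\mathbb P}\,\longrightarrow\, C$ from \eqref{e28}, and under this identification $\varpi$ restricts to $\rho$, so $T_\varpi$ restricts to $T_\rho$. Restricting $\nabla^1\,:\,T{\mathbf P}\,\longrightarrow\, T_\varpi$ to this submanifold yields a holomorphic splitting $\nabla'\,:\,T{\mathbb P}\,\longrightarrow\, T_\rho$ of the inclusion $T_\rho\,\hookrightarrow\, T{\mathbb P}$, i.e., a holomorphic connection on ${\mathbb P}\,\longrightarrow\, C$ (automatically integrable, as $\dim_{\mathbb C}C\,=\,1$). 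By the definition of the pulled-back connection, $\phi^*_C\nabla'$ restricted to $C\times\{x_0\}$ along the $TC$--directions is again $\nabla'$; hence the difference $\Delta\,:=\,\nabla^1-\phi^*_C\nabla'$, which is a global holomorphic section of $\varpi_*T_\varpi\otimes\Omega^1_{C\times X}$, has vanishing $TC$--component along $C\times\{x_0\}$.

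Finally I would analyze $\Delta$ using the decomposition $\Omega^1_{C\times X}\,=\,\phi^*_CK_C\oplus\phi^*_XK_X$ (see \eqref{e3}) together with $\varpi_*T_\varpi\,=\,\phi^*_C(\rho_*T_\rho)$, which gives
\[
H^0(C\times X,\, \varpi_*T_\varpi\otimes\Omega^1_{C\times X})\,=\, H^0\bigl(C\times X,\, \phi^*_C(\rho_*T_\rho\otimes K_C)\bigr)\,\oplus\, H^0\bigl(C\times X,\, \phi^*_C(\rho_*T_\rho)\otimes\phi^*_XK_X\bigr).
\]
Since $X$ is compact and connected, $\phi_{C*}{\mathcal O}_{C\times X}\,=\,{\mathcal O}_C$, so by the projection formula the first summand is $H^0(C,\, \rho_*T_\rho\otimes K_C)$ --- the space over which holomorphic connections on ${\mathbb P}\,\longrightarrow\, C$ form an affine space --- while by the K\"unneth formula the second summand is $H^0(C,\, \rho_*T_\rho)\otimes_{\mathbb C}H^0(X,\, K_X)$, which is exactly the space \eqref{e32} in which $\theta$ lives; moreover the inclusion of this second summand into $H^0(C\times X,\, \varpi_*T_\varpi\otimes\Omega^1_{C\times X})$ is precisely the assignment $\theta\,\longmapsto\,\widetilde\theta$ of \eqref{e33}. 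Writing $\Delta\,=\,\phi^*_C\delta_C+\widetilde\theta$ accordingly, the $TC$--component of $\Delta$ along $C\times\{x_0\}$ equals $\delta_C$ under the identification $\phi^*_C(\rho_*T_\rho\otimes K_C)\big\vert_{C\times\{x_0\}}\,=\,\rho_*T_\rho\otimes K_C$; by the normalization above it vanishes, so $\delta_C\,=\,0$ and $\nabla^1\,=\,\phi^*_C\nabla'+\widetilde\theta$, which is the form \eqref{e34}, with $\nabla'$ and $\theta$ uniquely determined.

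The main obstacle here is not analytic but a matter of careful bookkeeping: one must verify that the K\"unneth $X$--summand of the above decomposition matches, term by term, the construction of $\widetilde\theta$ in \eqref{e33}, and that restriction of $\nabla^1$ to $C\times\{x_0\}$ is genuinely compatible with $\phi^*_C$ in the sense needed to conclude $\delta_C=0$. Once these identifications are nailed down there is nothing further to do; in particular no positivity, vanishing or stability input about ${\mathbb P}$ is required, only the triviality of $\phi_{C*}{\mathcal O}_{C\times X}$ and $h^0(X,\,{\mathcal O}_X)\,=\,h^0(X,\,K_X)\,=\,1$.
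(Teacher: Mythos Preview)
Your argument is correct and follows the same skeleton as the paper's proof: construct $\nabla'$ by restricting a given connection to a single $C$--slice, then show that the difference $\nabla^1-\phi^*_C\nabla'$ lies entirely in the $\phi^*_XK_X$--summand and is thus of the form $\widetilde\theta$. The execution differs mainly in packaging. The paper argues slice by slice: it notes that $x\longmapsto \widehat\nabla^x$ is a holomorphic map from the compact $X$ to an affine space, hence constant, and that along each $\{c\}\times X$ the difference with the trivial connection is a section of a trivial bundle on $X$, hence $G$--invariant. You instead compute the global space $H^0(C\times X,\,\varpi_*T_\varpi\otimes\Omega^1_{C\times X})$ directly via the projection formula and K\"unneth, then kill the $\phi^*_CK_C$--component by your normalization at $x_0$. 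Both routes use the same underlying facts ($\phi_{C*}{\mathcal O}_{C\times X}={\mathcal O}_C$ and $h^0(X,{\mathcal O}_X)=1$), and neither needs any hypothesis on ${\mathbb P}$; your version makes the cohomological structure more explicit, while the paper's is a bit more hands-on about why the $X$--direction gives only constants.
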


\begin{proof}
Let $\widehat{\nabla}$ be a holomorphic connection on $\phi_C^*\mathbb{P}$. For any $x\, \in\, X$,
let $\widehat{\nabla}^x$ be the holomorphic connection on $(\phi_C^*\mathbb{P})\big\vert_{C\times\{x\}}
\,=\, {\mathbb P}$ over $C\times \{x\}\,=\, C$ obtained by restricting $\widehat{\nabla}$ to
$C\times\{x\}$. This produces a holomorphic map from $X$ to the space of holomorphic connections on
${\mathbb P}$. Since the space of holomorphic connections on ${\mathbb P}$ is an affine space for
a complex vector space, this map must be a constant one.

Next take any $c\, \in\, C$. The restriction of $\phi_C^*\mathbb{P}$ to $\{c\}\times X$ is the trivial
holomorphic bundle $X\times {\mathbb P}_c\, \longrightarrow\, X$, where ${\mathbb P}_c$ is the fiber
of $\mathbb P$ over $c$. Let $\nabla_0$ denote the trivial connection on this trivial fiber bundle
$X\times {\mathbb P}_c\, \longrightarrow\, X$. Let $\widehat{\nabla}_c$ denote the restriction of
$\widehat\nabla$ to
$$
(\phi_C^*\mathbb{P})\big\vert_{\{c\}\times X}\,=\, X\times {\mathbb P}_c\, \longrightarrow\, X\,=\,
\{c\}\times X.
$$
So we have
\begin{equation}\label{dc}
\widehat{\nabla}_c - \nabla_0 \, \in\, H^0(X,\, V_c\otimes_{\mathbb C} K_X),
\end{equation}
where $V_c$ is the fiber, over $c\, \in\, C$, of the adjoint vector bundle for the
principal $\text{PGL}(2,{\mathbb C})$ on $C$ given by $\mathbb P$ (see Remark \ref{rem1}). But any
holomorphic section of a trivial vector bundle on $X$ is a constant section; in particular,
all elements of $H^0(X,\, V_c\otimes_{\mathbb C} K_X)$ are constant sections (meaning the sections
are $G$--invariant, where $G$ is the group in \eqref{eg}). Hence
$\widehat{\nabla}_c - \nabla_0$ in \eqref{dc} is given by a constant section. From these observations
it follows that every holomorphic connection on the fiber bundle
$\phi_C^*\mathbb{P}\, \longrightarrow\, C\times X$ is of the form given in \eqref{e34}.
\end{proof}

In the proof of Lemma \ref{lem3} the condition that $\text{genus}(C)\,=\, 1$ is not used, but
the condition that $\text{genus}(X)\, \leq\, 1$ is crucially used.

\subsection{Nonexistence of $\mathcal F$--projective structure}

Consider $\text{Sym}^d_0(C)$ defined in Section \ref{se2}. Let
$$
\widehat{\mathcal U}\,\, \subset\,\, \text{Sym}^d_0(C)\times \text{Sym}^d_0(C)
$$
be the Zariski open subset consisting of all $\{x_1,\, \cdots\, x_d\},\,
\{y_1,\, \cdots\, y_d\}\,\in\, \text{Sym}^d_0(C)$ such that
$$
\{x_1,\, \cdots,\, x_d\}\cap\{y_1,\, \cdots,\, y_d\}\, =\, \emptyset.
$$
Let
\begin{equation}\label{eu}
{\mathcal U}\,\, \subset\,\, \widehat{\mathcal U}
\end{equation}
be the Zariski closed subvariety consisting of all $(\{x_1,\, \cdots\, x_d\},\,
\{y_1,\, \cdots\, y_d\})\,\in\, \widehat{\mathcal U}$ such that the 
two elements $\sum_{i=1}^d x_i$ and $\sum_{i=1}^d y_i$ of $C$ coincide (recall that
this condition is independent of the choice of the identity element of $C$). Denote
the multiplicative group ${\mathbb C} \setminus\{0\}$ by ${\mathbb C}^\star$. Let
$$
\widehat{\mathcal E}\,\, \longrightarrow\,\, {\mathcal U}
$$
be the holomorphic principal ${\mathbb C}^\star\times {\mathbb C}^\star$--bundle
whose fiber over any $(\{x_1,\, \cdots\, x_d\},\,
\{y_1,\, \cdots\, y_d\})\,\in\, \widehat{\mathcal U}$ is a pair $(\widehat{\omega},\, \widehat{\beta})$,
where $\widehat{\beta}\, \in\, H^0(X,\, K_X)\setminus \{0\}$ and $\widehat{\omega}$ is a meromorphic
$1$--form on $C$ whose polar divisor is exactly $\{x_1,\, \cdots\, x_d\}$ and the zero divisor is exactly
$\{y_1,\, \cdots\, y_d\}$; see Lemma \ref{lem2}. Consider the diagonal
subgroup
$$
{\mathbb C}^\star\,\, \hookrightarrow\,\, {\mathbb C}^\star\times {\mathbb C}^\star,\,\ \
c\, \longmapsto\, (c,\, c),
$$
and let
\begin{equation}\label{pb}
{\mathcal E}\,\, :=\, \, \widehat{\mathcal E}/{\mathbb C}^\star\,\, \longrightarrow\,\, {\mathcal U}
\end{equation}
be the quotient by this subgroup. So $\mathcal E$ is a holomorphic principal ${\mathbb C}^\star$--bundle
on $\mathcal U$.

Note that the construction in Section \ref{se2} produces a holomorphic foliation ${\mathcal F}_z$
of rank one on $C\times X$ for every element of $z\, \in\, {\mathcal E}$, where $\mathcal E$
is constructed in \eqref{pb}.

\begin{theorem}\label{thm1}
Assume that $d\, \geq\, 8$. There is a nonempty Zariski open subset 
$$
\mathbf{U}\,\, \subset\,\, {\mathcal E}
$$
such that for any $z\, \in\, \mathbf{U}$, the corresponding holomorphic foliation ${\mathcal F}_z$
of rank one on $C\times X$ satisfy the following condition: There is no ${\mathcal F}_z$--projective
structure on $C\times X$.
\end{theorem}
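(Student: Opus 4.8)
The plan is to argue by contradiction, using the uniqueness result (Proposition \ref{prop1}) and the symmetry of $C\times X$ to rigidify any hypothetical $\mathcal{F}_z$--projective structure, and then a dimension count to show that the $z\,\in\,\mathcal{E}$ for which such a structure exists form a proper Zariski--closed subset. Fix $z\,\in\,\mathcal{E}$, write $\mathcal{F}\,=\,\mathcal{F}_z$, and suppose $C\times X$ carries an $\mathcal{F}$--projective structure. By Proposition \ref{prop1} it is unique, hence the associated triple $({\mathbf P},\,\nabla,\,\sigma)$ of Lemma \ref{lem2a} is preserved by the translation group $G\,\cong\, X$ (which preserves $\mathcal{F}$ since it preserves $\beta$). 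Descending along the free quotient $C\times X\,\to\, C$ identifies $\mathbf{P}$ with $\phi_C^*\mathbb{P}$, $\sigma$ with $\phi_C^*\eta$, and $\nabla$ with $\phi_C^*\nabla'+\widetilde{\theta}$, where $\widetilde{\theta}\,=\,(\phi_C^* s)\otimes(\phi_X^*\beta)$ and $s\,\in\, H^0(C,\,\rho_*T_\rho)\,=\,H^0(C,\,\text{ad}(\mathbb{P}))$, exactly as in \eqref{e28}--\eqref{e34}. Since $C$ is a curve one has $F_{\phi_C^*\nabla'}\,=\,0$; since $\phi_X^*\beta$ is closed and $[\widetilde{\theta}\wedge\widetilde{\theta}]\,=\,0$, this gives $F_\nabla\,=\,\phi_C^*(\nabla' s)\wedge\phi_X^*\beta$. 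As $\phi_X^*\beta$ is nowhere zero, integrability of $\nabla$ (required by Lemma \ref{lem2a}) is equivalent to $\nabla' s\,=\,0$. Moreover $s\,\neq\, 0$: if $s\,=\,0$ then $\widetilde{\theta}\,=\,0$ and the homomorphism $\widetilde{\mathbf S}_\sigma$ would vanish along every fibre $\{c\}\times X$, which is impossible since $\mathcal{N}$ is a line bundle (compare Remark \ref{rem2}).

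Next, a nonzero flat section $s$ of $\text{ad}(\mathbb{P})$ is a nowhere--vanishing monodromy--invariant vector in the $sl(2,\,{\mathbb C})$--fibres, so the monodromy of $\nabla'$ lies in the centralizer of a nonzero element of $sl(2,\,{\mathbb C})$, hence in a maximal torus or a maximal unipotent subgroup of $\text{PGL}(2,\,{\mathbb C})$. In the torus case $\mathbb{P}\,=\,\mathbb{P}(L_1\oplus L_2)$ with $L_1,\, L_2$ flat of degree $0$; in the unipotent case $\mathbb{P}\,=\,\mathbb{P}(E)$ with $E$ a self--extension of a flat degree $0$ line bundle. Either way $\mathbb{P}\,=\,\mathbb{P}(E)$ with $\deg E\,=\,0$, so for the section $\eta$ corresponding to a quotient $E\,\twoheadrightarrow\, Q$ we get $\eta^*T_\rho\,=\,(\ker)^{-1}\otimes Q$ of degree $2\deg Q-\deg E\,=\,2\deg Q$. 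On the other hand the restriction argument in the proof of Proposition \ref{prop1} shows $\mathcal{N}^z\,\cong\,\eta^*T_\rho$ via the isomorphism ${\mathbf S}_\sigma$, and $\deg\mathcal{N}^z\,=\,d$; hence $\deg(\eta^*T_\rho)\,=\,d$, so $d$ must be even. Consequently, if $d$ is odd no $\mathcal{F}_z$--projective structure exists for any $z$ and the theorem is immediate; assume henceforth $d$ even.

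For $d$ even I would extract rigidity constraints and count dimensions. Unwinding $\widetilde{\mathbf S}_\sigma$ on $C\times X$: its $TC$--component is the pull-back of the second fundamental form $\psi\,\in\, H^0(C,\,K_C\otimes\eta^*T_\rho)\,\cong\, H^0(C,\,\eta^*T_\rho)$ of $\eta$ for $\nabla'$, and its $TX$--component is $\phi_X^*\beta$ times the pull-back of $s_\eta\,:=\,\eta^* s\,\in\, H^0(C,\,\eta^*T_\rho)$; moreover $\mathcal{F}\,=\,\ker\widetilde{\mathbf S}_\sigma$ is $C$--tangent exactly over $Z(\psi)\times X$ and $X$--tangent exactly over $Z(s_\eta)\times X$. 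Comparing with the construction of $\mathcal{F}_z$ in Section \ref{se2} (the vanishing loci of $\omega'$ and $\beta'$) and using Remark \ref{rem2} forces $Z(\psi)\,=\,\underline{y}$ and $Z(s_\eta)\,=\,\underline{x}$, in particular $\eta^*T_\rho\,\cong\,{\mathcal O}_C(\underline{x})\,\cong\,{\mathcal O}_C(\underline{y})$. Since the zero locus of the tautological vertical field $s$ on $\mathbb{P}(E)$ is the distinguished flat section(s), the equation $Z(s_\eta)\,=\,\underline{x}$ pins down $E$, $Q$ and $\eta$ in terms of $\underline{x}$ up to finitely many choices (a splitting $\underline{x}\,=\,D_1+D_2$ in the torus case); and as $\nabla'$ varies over the affine space of holomorphic connections on $\mathbb{P}(E)$ — one--dimensional for generic $E$, in the direction $s_\eta$ — the section $\psi$ sweeps only a line in $H^0(C,\,\eta^*T_\rho)$, so $Z(\psi)\,=\,\underline{y}$ confines $\underline{y}$ to a pencil inside $|{\mathcal O}_C(\underline{x})|\,\cong\,{\mathbb P}^{d-1}$. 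Hence $\{z\,\in\,\mathcal{E}\,:\,\mathcal{F}_z\text{ has a projective structure}\}$ lies in a finite union of images of families of dimension $\dim\text{Sym}^d_0(C)+O(1)\,=\,d+O(1)$ (the $O(1)$ absorbing the connection, the scale of $s$, the ${\mathbb C}^\star$--fibre of $\mathcal{E}$, and the lower--dimensional non--generic bundles), whereas $\dim\mathcal{E}\,=\,2d$. For $d\,\geq\, 8$ the inequality $d+O(1)\,<\,2d$ holds comfortably, so the locus is a proper Zariski--closed subset of $\mathcal{E}$ and its complement is the desired nonempty Zariski open $\mathbf{U}$.

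The main obstacle is this last step: one must run the case analysis carefully (torus versus unipotent monodromy, and the non--generic loci where $E$ is decomposable or torsion, or $\eta^*T_\rho$ is special), compute the second fundamental form $\psi$ explicitly enough to confirm it sweeps out only a line as $\nabla'$ varies, and check that no component of the existence locus reaches dimension $2d$; the hypothesis $d\,\geq\, 8$ is exactly what makes the dimension estimates beat $\dim\mathcal{E}$ uniformly in all cases.
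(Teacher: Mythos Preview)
Your overall strategy---descend to $C$ via the $G$--equivariance of the unique $\mathcal{F}$--projective structure (Section~\ref{se4.1}), then beat $\dim\mathcal{E}=2d$ by a dimension count on the resulting data on $C$---is exactly the paper's. The difference is that the paper's count is much cruder and therefore much shorter, and it sidesteps the case analysis you flag as the ``main obstacle.''

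The paper does \emph{not} use the integrability of $\nabla$, the flatness constraint $\nabla's=0$, or any structure theory of the monodromy. It simply bounds the dimension of \emph{all} quadruples $(\mathbb{P},\eta,\nabla',\theta)$ on $C$ satisfying only the single condition $\deg(\eta^*T_\rho)=d$ (equivalently, the second fundamental form of $\eta$ for $\nabla'$ vanishes at $d$ points). The estimate is: at most $1$ for the moduli of semistable $\mathbb{CP}^1$--bundles on $C$ admitting a holomorphic connection; at most $3$ for $\nabla'$ (since $\dim H^0(C,\mathrm{ad}(\mathbb{P})\otimes K_C)\leq 3$); at most $3$ for $\theta$; and exactly $d$ for $\eta$ (since $\dim H^0(C,\eta^*T_\rho)=d$). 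This gives $d+7$, and Remark~\ref{rem2} makes the map from the existence locus in $\mathcal{E}$ to the quadruple space injective, so $d+7<2d$ for $d\geq 8$ finishes the proof. No splitting into torus/unipotent cases, no analysis of how $\psi$ varies with $\nabla'$, no tracking of non-generic bundles.

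Your route via $\nabla's=0$ does yield a genuine bonus the paper does not record: it forces the monodromy of $\nabla'$ to be reducible, hence $\mathbb{P}=\mathbb{P}(E)$ with $\deg E=0$, hence $d=\deg(\eta^*T_\rho)$ is even; so for odd $d$ there is no $\mathcal{F}_z$--projective structure for \emph{any} $z$. That is a sharper statement than Theorem~\ref{thm1}. But for the theorem as stated it is unnecessary, and completing your $d+O(1)$ estimate rigorously in each case would be more work than the paper's one-line bound $1+3+3+d$.
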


\begin{proof}
First note that
\begin{equation}\label{e25}
\dim {\mathcal E}\,\,=\,\, \dim {\mathcal U} +1 \,\,=\,\, 2d-1 +1\,\,=\,\, 2d.
\end{equation}

Consider a triple $({\mathbb P},\, \eta,\, \nabla')$ on $C$,
where
\begin{enumerate}
\item $\rho\, :\, {\mathbb P}\,\longrightarrow\, C$ is a holomorphic fiber bundle whose typical
fiber is ${\mathbb C}{\mathbb P}^1$,

\item $\eta:\, C\, \longrightarrow \, {\mathbb P}$ is a holomorphic section of $\rho$, and

\item $\nabla'$ is a holomorphic connection on $\mathbb P$.
\end{enumerate}
Let $T_\rho\, \subset\, T{\mathbb P}$ be the holomorphic line subbundle given by the kernel of
the differential of $\rho$; so $T_\rho$ is the vertical tangent bundle for the projection $\rho$.
Let $$\psi\, :\, T{\mathbb P}\, \longrightarrow\, T_\rho$$ be the projection given by the
connection $\nabla'$, so the kernel of $\psi$ is the horizontal subbundle of $T{\mathbb P}$ for
$\nabla'$. Consider the differential
$$
d\eta\,\, :\,\, TC\,\, \longrightarrow\,\, \eta^*T{\mathbb P}
$$
of the section $\eta$. Let
\begin{equation}\label{e36}
(\eta^*\psi) \circ (d\eta)\, \, :\,\, TC\,\, \longrightarrow\,\, \eta^*T_\rho
\end{equation}
be the composition of homomorphisms.

Let us now estimate the dimension of all quadruples $({\mathbb P},\, \eta,\, \nabla',\, \theta)$ on $C$, where
\begin{enumerate}
\item $\rho\, :\, {\mathbb P}\,\longrightarrow\, C$ is a holomorphic fiber bundle whose typical
fiber is ${\mathbb C}{\mathbb P}^1$,

\item $\eta:\, C\, \longrightarrow \, {\mathbb P}$ is a holomorphic section of $\rho$,

\item $\nabla'$ is a holomorphic connection on $\mathbb P$ (see \eqref{e29}), and

\item $\theta\, \in\, H^0(C,\, (\rho_*T_\rho)\otimes_{\mathbb C} H^0(X,\, K_X))\, =\,
H^0(C,\, (\rho_*T_\rho))\otimes_{\mathbb C} H^0(X,\, K_X)$ (see \eqref{e32}),
\end{enumerate}
such that the homomorphism $(\eta^*\psi) \circ (d\eta)$ in
\eqref{e36} vanishes on exactly $d$ distinct points of $C$.

The holomorphic ${\mathbb C}{\mathbb P}^1$--fiber bundle $\mathbb P$ admits a holomorphic connection,
using which it can be shown that the dimension of the space of all such holomorphic ${\mathbb C}{\mathbb
P}^1$--fiber bundles on $C$ is $1$. To prove this first note that the fact that
$\mathbb P$ admits a holomorphic connection implies that the holomorphic principal
$\text{PGL}(2,{\mathbb C})$--bundle on $C$ corresponding to $\mathbb P$ is semistable \cite[p.~41,
Theorem 4.1]{BG}. Consequently, $\mathbb P$ is one of the following:
\begin{enumerate}
\item projectivization ${\mathbb P}({\mathbb O}_C\oplus{\mathcal L})$, where $\mathcal L$ is a holomorphic
line bundle on $C$ of degree zero;

\item projectivization of the unique nontrivial extension of ${\mathcal O}_C$ by ${\mathcal O}_C$;

\item projectivization of the unique nontrivial extension of $L$ by ${\mathcal O}_C$, where $L$ is any
holomorphic line bundle on $C$ of degree $1$.
\end{enumerate}
Consequently, the dimension of the space of all holomorphic ${\mathbb C}{\mathbb P}^1$--fiber bundles
on $C$ admitting a holomorphic connection is $1$.

Let $\mathbb V$ denote the adjoint vector bundle of the principal $\text{PGL}(2,{\mathbb C})$--bundle
on $C$ corresponding to $\mathbb P$ (see Remark \ref{rem1}). So $\mathbb V$ is a semistable vector
bundle of rank three and degree zero; the connection $\nabla'$ produces a holomorphic
connection on $\mathbb V$, and any holomorphic vector bundle on $C$ admitting a holomorphic connection
is semistable \cite[p.~41, Theorem 4.1]{BG}. Therefore, we have
$$
\dim H^0(C, \, {\mathbb V}\otimes K_C)\,=\, \dim H^0(C, \, {\mathbb V})\,\leq \, 
{\rm rank}({\mathbb V})\,=\, 3.
$$

We have $\rho_*T_\rho\,=\, {\mathbb V}$, so
$$
\dim H^0(C,\, (\rho_*T_\rho)\otimes_{\mathbb C} H^0(X,\, K_X))\, \leq \, 3.
$$

The tangent space, at $\eta$, of the space of holomorphic sections of $\mathbb P$ is
$H^0(C,\, \eta^* T_\rho)$. Since the homomorphism $(\eta^*\psi) \circ (d\eta)$ in
\eqref{e36} vanishes on exactly $d$ distinct points of $C$, we have
${\rm degree}(\eta^* T_\rho)\,=\, d$. This implies that
$$
\dim H^0(C,\, \eta^* T_\rho)\,\,=\, \, d
$$
as $d\, \geq\,1$.

Consequently, the dimension of the space of all 
quadruples $({\mathbb P},\, \eta,\, \nabla',\, \theta)$ on $C$ of the above type is bounded above by
$$
1+3+3+d \,\,=\, d+7.
$$
We have
\begin{equation}\label{ef}
d+7\, <\, 2d\,=\, \dim {\mathcal E}
\end{equation}
(see \eqref{e25}) because $d\, \geq\, 8$.

Consider the holomorphic foliations $\mathcal F$ on $C\times X$, given by a meromorphic form on
$C$ with a reduced pole and zero of order $d$, such that there is a ${\mathcal F}$--projective
structure on $C\times X$. In Section \ref{se4.1} we saw that there is a natural injective map from this 
space to the space of quadruples $({\mathbb P},\, \eta,\, \nabla',\, \theta)$ on $C$
of above type; the injectivity of the map follows from Remark \ref{rem2}. Therefore, the proof
is completed using \eqref{ef}.
\end{proof}

\section*{Acknowledgements}

The authors are grateful to Frank Loray and Jorge Pereira for helpful discussions. The first 
author is partially supported by a J. C. Bose Fellowship (JBR/2023/000003).


\end{document}